\title{Nonabelian Poincar\'{e} duality after stabilizing}
\author{
        Jeremy Miller \\
                Department of Mathematics\\
        CUNY Graduate Center\\
        365 Fifth Avenue, New York, NY
}
\date{\today}
\newcounter{prob}
\newcommand{\Simga}{\Sigma}
\newcommand{\Z}{\mathbb{Z}}
\newcommand{\R}{\mathbb{R}}
\newcommand{\m}{\longrightarrow}
\newcommand{\N}{\mathbb{N}_0}
\newcommand{\D}{\mathbb{D}}
\newcommand{\e}{\'{e} }
\newcommand{\p}{parallelizable }
\newcommand{\fm}{Fulton-MacPherson }
\newtheorem{theorem}{Theorem}[section]
\newtheorem{lemma}[theorem]{Lemma}
\newtheorem{proposition}[theorem]{Proposition}
\newtheorem{corollary}[theorem]{Corollary}
\newtheorem{definition}[theorem]{Definition}
\newtheorem{example}[theorem]{Example}
\newtheorem{conjecture}[theorem]{Conjecture}
\newenvironment{proof}[1][Proof]{\begin{trivlist}
\item[\hskip \labelsep {\bfseries #1}]}{\end{trivlist}}
\newcommand{\qed}{\nobreak \ifvmode \relax \else
      \ifdim\lastskip<1.5em \hskip-\lastskip
      \hskip1.5em plus0em minus0.5em \fi \nobreak
      \vrule height0.75em width0.5em depth0.25em\fi}
\begin{document}
\maketitle

\begin{abstract}
We generalize the nonabelian Poincar\'{e} duality theorems of Salvatore in \cite{Sa} and Lurie in \cite{Lu} to the case of not necessarily grouplike $E_n$-algebras (in the category of spaces). We define a stabilization procedure based on McDuff's ``bringing points in from infinity'' maps from \cite{Mc1}. For open connected parallelizable $n$-manifolds, we prove that, after stabilizing, the topological chiral homology of $M$ with coefficients in an $E_n$-algebra $A$, $\int_M A$, is homology equivalent to $Map^c(M,B^n A)$, the space of compactly supported maps to the $n$-fold classifying space of $A$.

\end{abstract}

\section{Introduction}

In this paper, we will be interested in two models of topological chiral homology. The first model that we will consider uses May's two-sided bar construction \cite{M} and was communicated to the author by Ricardo Andrade. It is a simplification of his construction from \cite{An} and is known to be equivalent to the definition of topological chiral homology introduced by Lurie in \cite{Lu}. For an explanation of this equivalence, see Remark 3.15 of \cite{Fr1}. We will also be interested in an even earlier model, Salvatore's configuration spaces of particles with summable labels \cite{Sa}. It is widely believed that this construction is equivalent to that of Lurie and Andrade. However, to the best of the author's knowledge, a proof of this equivalence has not been written up explicitly in the literature. We will prove a theorem for both models, which we call ``nonabelian Poincar\e duality after stabilizing.'' Even if one could prove that these two constructions are equivalent, it would still be interesting to have both proofs since the different nature of the proofs highlights the advantages and disadvantages of each construction. The two-sided bar construction has a natural filtration and thus one can use spectral sequence arguments to study its homology. On the other hand, for Salvatore's configuration spaces, the notion of relative configuration space is easy to define so it is possible to mimic arguments used in the 1970's to study classical configuration spaces \cite{Mc1} \cite{B}.

Topological chiral homology is a collection of constructions which take as input an $E_n$-algebra $A$ and a parallelized $n$-manifold $M$ and produces a space often denoted $\int_M A$. There is a scanning map $s: \int_M A \m Map^c(M,B^n A)$, the space of compactly supported functions from $M$ to the $n$-fold classifying space of $A$. An $E_n$-algebra $A$ is called grouplike if the induced monoid structure on $\pi_0(A)$ is a group. For grouplike $E_n$-algebras, the scanning map  $s: \int_M A \m Map^c(M,B^n A)$ is a homotopy equivalence \cite{Sa} \cite{Lu}. This fact is called nonabelian Poincar\e duality since it is equivalent to Poincar\e duality when $A=\Z$ after taking homotopy groups \cite{K3}.

If $A$ is not grouplike, then the scanning map is not a homotopy equivalence. For example, when $M=\R^n$, $\int_M A$ is homotopy equivalent to $A$ while $ Map^c(M,B^n A)$ is homotopy equivalent to $\Omega^n B^n A$. From now on, we assume that $n$ is at least $2$ (see \cite{MSe} for a discussion of the case $n=1$). At the level of $\pi_0$, the scanning map is not an isomorphism but is instead the inclusion of a commutative monoid into its Grothendieck group. While the scanning map $s$ is not a homotopy equivalence, it is a group completion. Let $\{ a_i \}$ be representatives of generators of $\pi_0(A)$ and let $m_{i}: A \m A$ be multiplication by $a_i$ maps. The group completion theorem \cite{MSe} states that the induced map $s: hocolim_{m_i} A \m \Omega^n B^n A$ is a homology equivalence. We will say that after stabilizing, $A$ is homology equivalent $\Omega^n B^n A$. We shall generalize this to arbitrary open parallelizable manifolds  $M$ and prove that, after stabilizing, $\int_M A$ is homology equivalent to $Map^c(M,B^n A)$. By open, we mean the interior of a not necessarily compact manifold $\bar M$ with non-empty boundary. We prove this result both for Andrade's two-sided bar construction model of topological chiral homology as well as Salvatore's configuration spaces of particles with summable labels. The stabilization maps $t_i:\int_M A \m \int_M A$ will be generalizations of  McDuff's ``bringing points in from infinity'' maps introduced in \cite{Mc1}. The goal of this paper is to prove the following theorem.

\begin{theorem}
Let $M$ be the interior of a connected (not necessarily compact) $n$-manifold with nonempty boundary and with $n>1$. There are stabilization maps $t_i : \int_M A \m \int_M A$ and a scanning map $s : \int_M A \m Map^c(M, B^n A)$ such that $s$ induces a homology equivalence between $hocolim_{t_i} \int_M A$ and $Map^c(M,B^n A)$. 

\label{theorem:main}
\end{theorem}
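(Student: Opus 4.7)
The plan is to reduce the general statement to the base case $M = \R^n$, where the theorem is the classical group completion theorem \cite{MSe}. The essential ingredients are (1) a suitable definition of the stabilization maps $t_i$ modeled on McDuff's ``bringing in points from infinity'' operators, (2) an excision-style codescent property for $hocolim_{t_i} \int_M A$, and (3) compatibility of the scanning map with this excision on both sides.

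First, I would define $t_i \colon \int_M A \m \int_M A$ using that $M$ has nonempty ends: fixing a collar of a boundary component of $\bar M$, push a particle labeled by a representative $a_i$ of a generator of $\pi_0(A)$ in from infinity. This construction admits a natural implementation in each of the two models of topological chiral homology under consideration. One then checks that the scanning map $s$ is compatible, up to coherent homotopy, with $t_i$ and with the analogous operation on $Map^c(M, B^n A)$, which is a homotopy equivalence since $B^n A$ is connected. This provides the induced map $s \colon hocolim_{t_i} \int_M A \m Map^c(M, B^n A)$ whose behavior we must analyze.

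Next, I would induct on a handle decomposition of $M$, noting that an open manifold with nonempty boundary admits a handle structure with no top-dimensional handles. The base case $M = \R^n$ uses the identification $\int_{\R^n} A \simeq A$ together with the group completion theorem to obtain a homology equivalence $hocolim_{t_i} A \simeq \Omega^n B^n A \simeq Map^c(\R^n, B^n A)$. For the inductive step, one writes $M$ as an open cover $M = U \cup V$ with $U$, $V$, and $U \cap V$ of strictly simpler handle structure, and invokes a Mayer-Vietoris argument on both sides of the scanning map. The right-hand side satisfies excision for formal sheaf-theoretic reasons; the work lies in establishing excision for the stabilized left-hand side and checking that the maps $t_i$ and $s$ respect the gluing.

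The main obstacle is precisely this excision property for $hocolim_{t_i} \int_M A$, and the two models demand rather different techniques, justifying the paper's decision to carry out both arguments. For Andrade's two-sided bar construction, I would exploit the natural simplicial filtration and the resulting spectral sequence converging to $H_*(\int_M A)$, then show that stabilization acts compatibly so as to produce a Mayer-Vietoris statement on the $E^\infty$ page. For Salvatore's configuration spaces with summable labels, I would follow the classical strategy of \cite{Mc1} and \cite{B}: define relative configuration spaces $C(M, N; A)$, establish McDuff-Segal type quasifibration sequences relating them, and verify that the bringing-in-from-infinity maps propagate across these sequences. In both cases, the delicate point is ensuring that group completion interacts correctly with the gluing, so that a homology equivalence established before passage to the homotopy colimit still holds after. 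This requires the group completion theorem in sufficient generality (for simplicial monoids acting on each side of the Mayer-Vietoris square) and care in showing that the scanning map is sufficiently natural to commute with the restriction and inclusion maps appearing in the cover.
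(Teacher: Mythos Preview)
Your handle-induction strategy is a plausible outline, but it diverges substantially from what the paper actually does, and it carries an unaddressed gap.

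For Andrade's model, the paper performs no handle induction and no Mayer--Vietoris. It works directly with the Segal spectral sequence for $|B_*(D(M),D_n,A)|$. Each level $B_k(D(M),D_n,A)\simeq C(M)(D_n^kA)$ is a classical labeled configuration space, and the McDuff--B\"odigheimer theorem (Theorem~\ref{theorem:limitConfig}) already says each such level becomes homology equivalent to the corresponding mapping-space level \emph{after stabilizing over all of} $\pi_0(D_n^kA)$. The subtlety is that the simplicial stabilization maps $t_{a^k}$ built from $a\in A$ do not range over all those components. The central new content is Lemma~\ref{lemma:stabequiv}, a purely simplicial computation showing that on the $E_2$ page $t_\alpha$ for arbitrary $\alpha\in B_k(D(\partial M\times(0,1)),D_n,A)$ agrees with $t_{a^k}$ for suitable $a$. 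No cover of $M$ enters.

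For Salvatore's model, the paper again avoids induction on $M$. It fixes once and for all a single half-ball $Q_2\subset M$ near the boundary and studies $\pi\colon hocolim_{t_{a_i}}C(M;A)\to C(M,Q_2;A)$. The base is weakly equivalent to the relative section space by Theorem~\ref{theorem:relscan}; the work is to show $\pi$ is a homology fibration with fiber $hocolim_{t_{a_i}}C(Q_2;A)$ via McDuff's criterion (Proposition~\ref{theorem:DoldMcDuff}), then compare Serre spectral sequences. The fiber is handled by group completion since $Q_2\cong\R^n$; the remainder of $M$ is never decomposed.

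The gap in your proposal is the step ``write $M=U\cup V$ with compatible stabilization.'' The maps $t_i$ are anchored at a fixed end of $M$, and in a handle decomposition the pieces $U$, $V$, $U\cap V$ need not all carry that end, nor compatible ends of their own. You would have to equip each piece with stabilization maps and prove the resulting group completions assemble under restriction and inclusion; you name this as ``the delicate point'' but supply no mechanism, and it is not clear one exists without essentially reproving the theorem. The paper's arguments sidestep this entirely: in the bar-construction proof the manifold $M$ is fixed and only the simplicial degree varies, while in the configuration-space proof only one collar $Q_2$ is excised and $M_2=M-Q_2$ stays intact.
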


In Salvatore's model, we are able to generalize Theorem \ref{theorem:main} to not necessarily parallelizable manifolds. Since it is widely believed that the two models of topological chiral homology are equivalent, it is natural to conjecture that Theorem \ref{theorem:main} for Andrade's model can be generalized to non-parallelizable manifolds. Unfortunately however, the methods of proof of this paper do not seem equipped to  handle this generality. We also conjecture that similar theorems (nonabelian Poincar\'{e} duality after stabilizing) are true for bundles of $E_n$-algebras and $E_n$-algebras in categories other than spaces.

In Section 2, we prove Theorem \ref{theorem:main} when $\int_M A$ is the model of topological chiral homology defined using the two sided bar construction due to Andrade and in Section 3 we prove the theorem when $\int_M A$ is Salvatore's configuration space of particles with summable labels. Since Andrade's model is known to be homotopy equivalent to Lurie's model, Theorem \ref{theorem:main} will also be true when $\int_M A$ is interpreted to mean Lurie's definition of topological chiral homology. In Section 4, we make a conjecture regarding homological stability for the connected components of $\int_M A$.

\paragraph{Acknowledgments}

I would like to thank Ricardo Andrade and Jonathan Campbell for many helpful discussions and the referee for many important suggestions.

\FloatBarrier
\section{Topological chiral homology via the two-sided bar construction}

In this subsection, we will describe a definition of topological chiral homology derived from \cite{An} employing the monadic two-sided bar construction. The monadic two-sided bar construction was introduced in \cite{M} to prove the recognition principle, the theorem that all connected algebras over the little $n$-disks operad are homotopy equivalent to $n$-fold loop spaces. May's proof of the recognition principle immediately generalizes to show that the scanning map for Andrade's model of topological chiral homology is a homotopy equivalence, in the case when the $E_n$-algebra is connected. We will then generalize this proof to the case of non-connected algebras using the Segal spectral sequence for the homology of the geometric realization of a proper simplicial space \cite{Se2}. This will prove Theorem \ref{theorem:main} when we interpret $\int_M A$ to mean Andrade's two sided-bar construction model of topological chiral homology. In Subsection 2.1, we review basic properties of operads and their modules and algebras. In 2.2, we recall the definition of monads and their algebras and functors. In 2.3, we describe the two-sided bar construction and Andrade's model of topological chiral homology. In 2.4, we review properties of simplicial spaces. In 2.5, we review classical theorems about configuration spaces of distinct points in a manifold. In 2.6, we prove nonabelian Poincar\e duality for connected $E_n$-algebras. Finally, in 2.7, we prove that Andrade's model of topological chiral homology exhibits nonabelian Poincar\e duality after stabilizing (Theorem \ref{theorem:main}).

\FloatBarrier
\subsection{Symmetric sequences, operads, modules and algebras}

In this subsection we recall the definition of operads and their modules and algebras. An efficient way of defining operads and their modules is via $\Sigma$-spaces (called symmetric sequences in spoken language). Let $\N$ denote the non-negative integers. 

\begin{definition}
A $\Sigma$-space is a collection of spaces $X(k)$ for all $k \in \N$ such that $X(k)$ has an action of the symmetric group $\Sigma_k$. A map between $\Sigma$-spaces $f:X \m Y$ is a collection of equivariant maps $f_k:X(k) \m Y(k)$.
\end{definition}

The category of $\Sigma$-spaces has a (non-symmetric) monoidal structure defined as follows.

\begin{definition}
For $X$ and $Y$ $\Sigma$-spaces, $X \otimes Y$ is the $\Sigma$-space such that $(X \otimes Y)(k) = \bigsqcup_{j=0}^{\infty} X(j) \times_{\Sigma_j} \bigsqcup_{f \in Map(k,j)} \prod_{i=1}^j Y(|f^{-1}(i)|)$. Here $Map(k,j)$ is the set of maps from $\{1, \ldots k\}$ to  $\{1, \ldots j\}$ and $\Sigma_k$ acts via precomposition.

\end{definition}

Note that the unit with respect to this product is given by the $\Sigma$-space $\iota$ with:  $$\iota(n) = \begin{cases}
 pt, & \text{if }  n=1 \\
\varnothing & \text{if } n \neq 1
\end{cases}$$

\begin{definition}
An operad $\mathcal O$ is a monoid in the category of $\Sigma$-spaces.
\end{definition}

In other words, an operad $\mathcal O$ is a $\Sigma$-space with maps $m : \mathcal O \otimes \mathcal O \m \mathcal O$ and $i: \iota \m \mathcal O$ satisfying the obvious compatibility relations. Denote the image of $\iota$ by $1 \in \mathcal O(1)$.

\begin{definition}
The data of a left module structure on a $\Sigma$-space $\mathcal L$ over an operad $\mathcal O$ is a map $p:\mathcal O \otimes \mathcal L \m \mathcal L $ such that the following diagrams commute:

$$
\begin{array}{ccccccccl}
 \mathcal O \otimes \mathcal O \otimes \mathcal L  &\overset{id \otimes p}{\m} & \mathcal O \otimes \mathcal L & & \iota \otimes \mathcal L  &\overset{i}{\m} & \mathcal O \otimes \mathcal L  \\
m \otimes id \downarrow  & & p \downarrow  &       &    &  \searrow & p \downarrow  \\

\mathcal O \otimes \mathcal L  &\overset{p}{\m} & \mathcal L  & &  & & \mathcal L\\

\end{array}$$

\label{definition:module}

\end{definition}

We likewise define right modules over operads. There is a functor from spaces to $\Sigma$-spaces which sends a space $X$ to the $\Sigma$-space with: $$X(n) = \begin{cases}
 X, & \text{if }  n=0 \\
\varnothing & \text{if } n \neq 0
\end{cases}$$ We will ignore the distinction between a space and its image as a $\Sigma$-space.

\begin{definition}
For an operad $\mathcal{O}$, an $\mathcal{O}$-algebra is a space $A$ with the structure of a left $\mathcal{O}$-module.
\end{definition}

Note that if $X$ is a $\Sigma$-space and $Y$ is a space, then the formula for $X \otimes Y$ simplifies to:
$X \otimes Y=\bigsqcup_k X(k) \times_{\Sigma_k} Y^k$.

\FloatBarrier
\subsection{Monads, right functors and algebras}

From now on, we will also assume that all operads $\mathcal O$ have $\mathcal O(0) =\{0\}$. All algebras that we will consider will have a base point $a_0 \in A$ equal to the image of $\mathcal O(0) \m A$. We will recall the functor on the category of based spaces ($Top_*$) associated to a right module. See \cite{M} for a more detailed treatment of the topics of this section.

\begin{definition}
For $(X,x_0)$ a based space and $\mathcal R$ a right module over an operad $\mathcal O$, define a functor $R: Top_* \m Top_*$ by $ R X = \mathcal R \otimes X / \sim$. Here $\sim$ is the relation that if $r \in \mathcal R$, then $(r;x_1, \ldots x_0 \ldots x_n) \sim (r';x_1, \ldots  x_n)$ with $r'$ the composition of $r$ with $(1,\ldots 1,0, 1 \ldots 1)$.
\end{definition}

We follow the convention of \cite{M} and denote functors associated to operads or right modules by standard font letters. The functor $O$ associated to an operad $\mathcal O$ (viewed as a right module over itself) has more structure than functors coming just from right modules. These types of functors are called monads.

\begin{definition}
A functor $O$, and two natural transformations $\mu : OO \m O$ and $\eta : Id \m O$ are called a monad if the following diagrams commute for every based space $X$:

$$
\begin{array}{cccccccccccl}
OX  &\overset{\eta O}{\m} &OOX & \overset{ O \eta}{ \leftarrow} OX &  & OOOX  &\overset{O \mu }{\m} &OOX  \\
 & id \searrow & \mu \downarrow  &       id \swarrow    & &  \mu O \downarrow   &  & \mu \downarrow  &  \\

  & & OX & & & OOX & \overset{\mu }{\m} & OX   \\

\end{array}$$

\end{definition}

\begin{definition}
Let $O$ be a monad. A space $A$ and a map $\xi :OA \m A$ is called an $O$-algebra if the following diagrams commute:

$$
\begin{array}{ccccccccl}
A  &\overset{\eta }{\m} &OA & & OOA  &\overset{\mu}{\m} &OA\\
 & id \searrow & \xi \downarrow  &  &  O \xi \downarrow   &  & \xi \downarrow  &         \\

  & & A & & OA & \overset{\xi}{\m} & A\\

\end{array}$$

\end{definition}

Note that the data of being an algebra over an operad is the same as the data of being an algebra over the monad associated to that operad.

\begin{definition}
Let $O$ be a monad. An $O$-functor is a functor $R$ and natural transformation $\xi : R O \m R$ making the following diagrams commute for every based space $X$:

$$
\begin{array}{ccccccccl}
RX  &\overset{R \eta }{\m} &ROX & & ROOX &\overset{R \mu}{\m} &ROX  \\
 & id \searrow & \xi \downarrow  &         &  \xi \downarrow   &  & \xi \downarrow   \\

  & & RX & &  ROX & \overset{\xi}{\m} & RX\\

\end{array}$$

\end{definition}

Note that if $\mathcal R$ is a right $\mathcal O$-module, then $R$ is an $O$-functor. Unlike with algebras, there are $O$-functors which do not come from right $\mathcal O$-modules.

\FloatBarrier
\subsection{Topological chiral homology via the two-sided bar construction}

In this subsection, we review a construction of \cite{M} called the two-sided bar construction. May used this construction to prove the recognition principle, the theorem that connected $D_n$-algebras  (see Definition \ref{diskd}) are homotopy equivalent to $n$-fold loop spaces. The two-sided bar construction is a construction which takes as inputs, a monad $O$, an $O$-algebra $A$ and an $O$-functor $R$ and produces a simplicial space $B_*(R,O,A)$. We will then recall Andrade's definition of topological chiral homology which is the two-sided bar construction when $O$ is the monad associated to the little $n$-disks operad and $R$ is a right module defined using embeddings of disks in a manifold.

The space of $k$ simplices of $B_*(R,O,A)$ will be $R O^k A$. The algebra composition map $O A \m A$ induces a map $d_0 : B_k(R,O,A) \m B_{k-1}(M,O,A)$. The monad composition map $O O \m O$ gives $k-1$ maps $d_i : B_k(R,O,A) \m B_{k-1}(R,O,A)$ for $i=1 \ldots k-1$ and the $O$-functor composition map $R O \m R$ gives another map, $d_k : B_k(R,O,A) \m B_{k-1}(R,O,A)$. These maps will be the face map of $B_*(R,O,A)$. The degeneracies, $s_i :  B_k(R,O,A) \m B_{k+1}(R,O,A)$ are induced by the unit of the monad $Id \m O$. In \cite{M}, May noted that these maps satisfy the axioms of face and degeneracy maps of a simplicial space.

\begin{definition}
For $O$ a monad in based spaces, $R$ an $O$-functor and $A$ an $O$-algebra, let $B_*(R,O,A)$ be the simplicial space described above and let $B(R,O,A)$ denote its geometric realization.

\end{definition}

Now we will recall the definition of the little $n$-disks operad $D_n$. We give examples of algebras, modules and functors over $D_n$. These operads, modules and functors will be used to define Andrade's model of topological chiral homology and the scanning map. Let $\D_n$ be the open unit ball in $\R^n$.

\begin{definition}
Let $D_n$ be the $\Sigma$-space with $D_n(k)$ being the space of disjoint axis-preserving affine-linear embeddings of $\bigsqcup_{i=1}^k  \D_n$ into $\D_n$. Topologize this space with the subspace topology inside the space of all continuous maps with the compact open topology. This forms an operad via composition of embeddings.
\label{diskd}
\end{definition}

\begin{figure}
\begin{center}\scalebox{.25}{\includegraphics{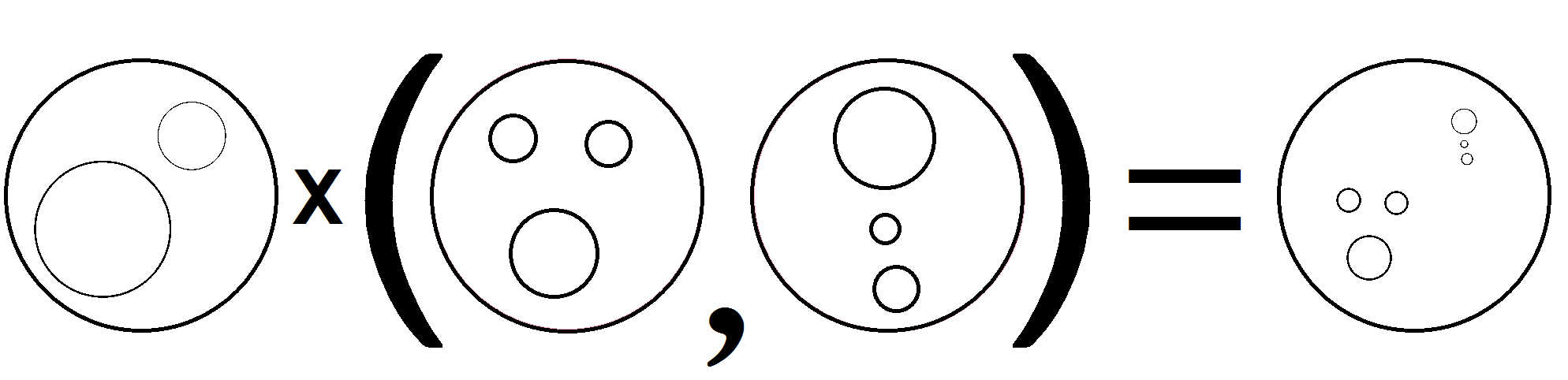}}\end{center}
\caption{Little 2-disks operad}
\label{littedisks}
\end{figure}

For orientable manifolds $N$ and $M$, let $Emb(N,M)$ denote the space of smooth orientation preserving embeddings.

\begin{definition}
Let $M$ be a parallelized $n$-manifold. Using the parallelization, the derivative at a point of an orientation preserving map between two framed $n$-manifolds can be identified with a matrix in $GL_n(\R)^+$ (+ denotes positive determinant). Define $D(M)(k)$ to be the pullback of the following diagram: $$\begin{array}{ccccccccl}
D(M)(k)  & \m & Emb(\bigsqcup_{i=1}^k \D_n,M) \\
 \downarrow & & D_0 \downarrow  &          \\

 Mat_{\R}(n,n)^k & \overset{exp}{\m} & GL^+_n(\R)^k \\

\end{array}.$$ Here the map $D_0: Emb(\bigsqcup_{i=1}^k \D_n,M) \m  GL^+_n(\R)^k$ is the map which records the derivatives at $0 \in \D_n$ of all of the embeddings $\D_n \m M$.
\end{definition}

The spaces $D(M)(k)$ assemble to form a $\Sigma$-space denoted $D(M)$. The purpose of the matrices is to make $D(M)$ homotopy equivalent to the space of configurations of ordered  distinct points in $M$. That is, the map which sends a collection of embeddings to the image of the centers of each disk induces a homotopy equivalence between $D(M)(k)$ and $ M^k-\Delta_{fat}$ where $\Delta_{fat}$ is the fat diagonal. This map is a homotopy equivalence since the subspace of $Emb(\D_n,M)$ consisting of maps with a fixed value and derivative  at $0 \in \D_n$ is contractible (see Chapter 4, Section 5 of \cite{An}). This contrasts with the fact that the fibers of $ Emb(\bigsqcup_{i=1}^k \D_n,M) \m M^k-\Delta_{fat}$ are homotopic to $GL_n^+(\R)^k$.

The space $D(M)$ has the structure of a right $D_n$-module as follows. Ignoring the matrices, the module structure is induced by composition of embeddings. To get the labeling matrices correct, use the following procedure. If $f:\D_n \m M$ is an embedding with matrix $A_0$, and $e:\D_n \m \D_n$ is an affine-linear axis-preserving embedding, pick a path in $D_n(1)$, $e_t$ with $e_0=id$ and $e_1=e$. Next consider the following path of matrices: $B_t=D_{f \circ e_t}(0)$. Since $B_0=exp(A_0)$, the path determines a branch of the logarithm and a unique continuous choice for $A_t$ with $exp(A_t)=B_t$. Associate $A_1$ to the composition $f \circ e$.

To define the scanning map, we need to consider the following $D_n$-functor.

\begin{figure}
\begin{center}\scalebox{.2}{\includegraphics{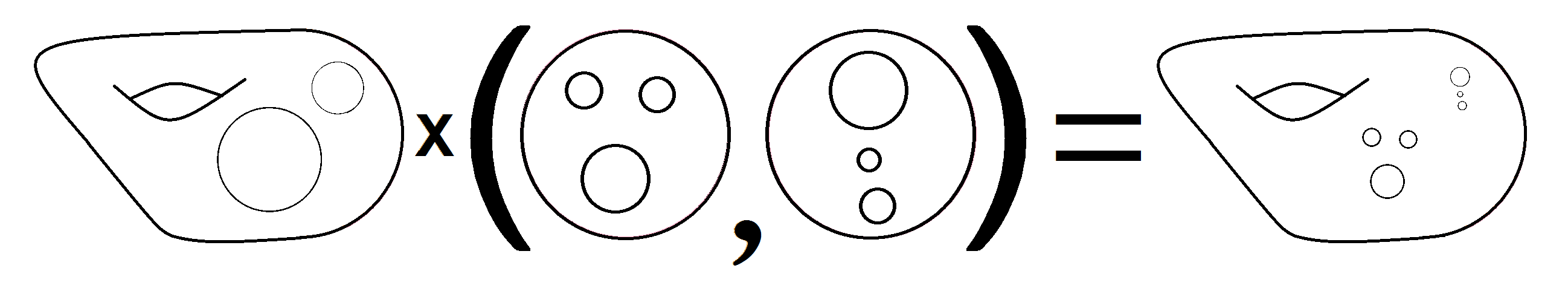}}\end{center}
\caption{Embedding module}
\label{figure:embeddingmodule}
\end{figure}

\begin{example}
Let $Z$ be a paracompact space. The functor $Y \m Map^c(Z , \Simga^n Y)$ is a $D_n$-functor. Here  $Map^c(Z , \Simga^n Y)$ is defined to be the space of based maps from the one point compactification of $Z$ (with $\infty$ as the basepoint) to $Y$.
\end{example}

Note that the standard definition of compactly supported maps is homotopic to this one. We use the convention that the one point compactification of a compact space is the space with a disjoint basepoint. 

This is a $D_n$-functor because the functor $Y \m \Sigma^n Y$ is a $D_n$-functor (page 128 of \cite{M}). For $M$ a parallelized $n$-manifold, there is a scanning natural transformation of $D_n$-functors $s : D(M) \m Map^c(M,\Sigma^n \cdot )$ defined as follows. Let $Y$ be a based space and $e_i: \D_n \m M$ be embeddings. Map the pair $(e_1, \ldots e_k; y_1 \ldots y_k)$ to a map which is constant outside of the images of the $e_i$. For $m \in im(e_i)$, $(e_i^{-1}(m),y_i)$ defines a point inside $\Simga^n Y$. Map $m$ to this point. Here we view $\Sigma^n Y$ as $Y$ smashed with the one point compactification of $\D_n$. This is a natural transformation of $D_n$-functors. 

The scanning natural transformation induces the scanning map $s:B(D(M),D_n,A) \m Map^c(M,B^n A)$ as follows. The natural transformation induces a map $B(D(M),D_n,A) \m B(Map^c(M,\Sigma^n \cdot ),D_n,A)$. Let $X$ be a based space and let $A_*$ be a simplicial space. The spaces of based maps, $Map^{\bullet}(X,A_k)$, assemble to form a simplicial space $Map^{\bullet}(X,A_*)$. There is a natural map $|Map^{\bullet}(X,A_*)| \m Map^{\bullet}(X,|A_*|)$. Taking $X$ equal to the one point compactification of $M$ gives a map $B(Map^c(M,\Sigma^n \cdot),D_n,A) \m  Map^c(M,B( \Sigma^n \cdot ,D_n,A))$. The composition of these two maps is defined to be the scanning map $s:B(D(M),D_n,A) \m Map^c(M,B^n A)$.

\FloatBarrier
\subsection{Properties of simplicial spaces}

In this section, we recall some facts about simplicial spaces that we will need to prove that the scanning map induces a homotopy or homology equivalence. Most of these were proved in \cite{M}. We will also use a lemma appearing in \cite{HM}. Recall that a levelwise weak homotopy equivalence between simplicial spaces does not always induce a weak homotopy equivalence between geometric realizations. The following sufficient condition is due to May in \cite{M}.

\begin{definition}
A simplicial space $A_*$ is called proper if $\cup s_i(A_i) \m A_{i+1}$ is a cofibration for each $i$.
\end{definition}

\begin{theorem}
A map between proper simplicial space $f_* :A_* \m B_*$ induces homology or weak homotopy equivalence on geometric realizations if it does levelwise.
\label{theorem:levelwiseproper}
\end{theorem}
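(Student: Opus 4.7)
The plan is to induct on the skeletal filtration of the geometric realization. Recall that $|A_*|$ carries a filtration $|A_*|_0 \subset |A_*|_1 \subset \cdots$ with $|A_*| = \mathrm{colim}_n |A_*|_n$, and that $|A_*|_n$ is obtained from $|A_*|_{n-1}$ by a pushout
\[
\begin{array}{ccc}
(\Delta^n \times L_n A) \cup_{\partial \Delta^n \times L_n A} (\partial \Delta^n \times A_n) & \longrightarrow & \Delta^n \times A_n \\
\downarrow & & \downarrow \\
|A_*|_{n-1} & \longrightarrow & |A_*|_n,
\end{array}
\]
where $L_n A := \bigcup_{i=0}^{n-1} s_i(A_{n-1}) \subset A_n$ is the subspace of degenerate simplices. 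The properness hypothesis is precisely the condition that $L_n A \hookrightarrow A_n$ be a cofibration, which in turn is exactly what is needed to make the top and hence the bottom horizontal map of this pushout a cofibration as well.

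For the weak homotopy statement, I would induct on $n$. Assuming $|f_*|: |A_*|_{n-1} \to |B_*|_{n-1}$ is a weak equivalence, and that $f_n : A_n \to B_n$ is one by hypothesis, I would feed both into the pushout square and apply the standard gluing lemma for weak equivalences along cofibrations. This requires the subsidiary fact that $L_n A \to L_n B$ is itself a weak equivalence, which I would prove by a secondary induction: write $L_n A$ as an iterated pushout of the images $s_i(A_{n-1}) \cong A_{n-1}$ glued along their intersections (which are themselves images of $s_i s_j$-type composites on lower simplices), and use properness to ensure each assembly step is a homotopy pushout. Passing to the sequential colimit along the cofibration tower $|A_*|_n \hookrightarrow |A_*|_{n+1}$ finishes the argument.

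For the homology statement I would run the same induction, replacing the gluing lemma for weak equivalences with the five lemma applied to the Mayer--Vietoris long exact sequences of the pushouts. Equivalently, and perhaps more cleanly, one can invoke the Segal spectral sequence associated to the skeletal filtration, which under properness has $E^1_{p,q} = H_q(A_p, L_p A)$ and converges strongly to $H_{p+q}(|A_*|)$; the map $f_*$ induces a morphism of such spectral sequences, and the hypothesis together with the $L_n$-induction identifies the map on $E^1$ as an isomorphism, whence the comparison theorem yields the conclusion.

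The main obstacle, and the reason the properness hypothesis is unavoidable, is the subsidiary induction showing that $L_n A \to L_n B$ is a weak (or homology) equivalence: the union $\bigcup_i s_i(A_{n-1})$ is highly non-disjoint, with intersection pattern governed by the simplicial identities, so one must proceed by a careful combinatorial assembly in which properness guarantees the relevant pushouts behave homotopically. Since the result itself is classical and due to May, an efficient treatment in practice simply cites the Appendix of \cite{M}.
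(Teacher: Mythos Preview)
The paper does not supply its own proof of this statement: it records the theorem as due to May and, immediately afterward, remarks that the homology case follows from Segal's spectral sequence for the geometric realization of a proper simplicial space. Your proposal is a correct sketch of exactly those classical arguments --- the skeletal-filtration induction with the gluing lemma is May's proof, and your alternative via the filtration spectral sequence is Segal's --- so there is nothing to reconcile. Your closing sentence, that an efficient treatment simply cites \cite{M}, is precisely what the paper does.

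One minor point worth noting for accuracy: the $E^1$-page you write as $H_q(A_p, L_pA)$ comes from the skeletal filtration of $|A_*|$, whereas the version the paper describes is the bicomplex spectral sequence with $E^2$-page given by the homology of $\bigl(H_p(A_\bullet), \sum (-1)^i d_{i*}\bigr)$. These agree after identifying the normalized and unnormalized chain complexes of a simplicial object, so either formulation suffices; just be aware they are not literally the same page before that identification.
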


The fact that a levelwise homology equivalence induces a homology equivalence on geometric realizations follows from a spectral sequence introduced by Segal in \cite{Se2}. Given a simplicial space $A_{*}$, let $E^0_{pq}=C_p(A_q)$. This is a double complex with the following two differentials. The first is induced by the differential on singular chains and the second induced by the alternating sum of the face maps. If $A_*$ is proper, this gives a spectral sequence converging to the homology of $|A_{*}|$, the geometric realization of $A_{*}$ with the following $E_2$ page. Let $\partial_{pq}: H_p(A_q) \m H_p(A_{q-1})$ be the alternating sum of the maps in homology induced by the face maps. This forms a chain complex for each $p$. Call this chain complex $\mathcal{E}_p$. The Segal spectral sequence has $E_2^{pq}=H_q(\mathcal{E}_p)$.

To use the Segal spectral sequence or Theorem \ref{theorem:levelwiseproper}, one needs to be able to prove that a given simplicial space is proper. In \cite{M}, May gives the following criterion for $B_*(M,O,A)$ being a proper simplicial space. To state it, May introduces the following definitions.

\begin{definition}
A pair $(X,A)$ of spaces is an NDR-pair if there exists a map $u :X \m [0,1]$ such that $A = u^{-1}(\{0\})$ and a homotopy $h : [0,1] \times X \m X$ with $h(0, x) = x$ for all $x \in X$, $h(t, a) = a$ for all $(t, a) \in [0,1] \times A$, and $h(1, x) \in A$ for all $x \in u^{-1}([0, 1))$. The pair $(h, u)$ is said to be a representation of $(X,A)$ as an NDR-pair.
\end{definition}

\begin{definition}
A functor $F : Top_* \m Top_*$ is admissible if any representation $(h, u)$ of $(X,A)$ as an NDR-pair determines a representation $(Fh, Fu)$ of $(FX, FA)$ as an NDR-pair such that $(Fh)_t = F(h_t)$ on $X$ and such that, for any map $g : X \m X$ with $u(g(x)) < 1$ whenever $u(x) < 1$, the map $Fu : FX \m [0,1]$ satisfies $(Fu)((Fg)(y)) < 1$ whenever $Fu(y) < 1, y \in FX$.
\label{definition:admissable}

\end{definition}

The definition of admissible functor is useful because of the following theorem in \cite{M}.

\begin{proposition}
Let $\mathcal O$ be an operad with $\{id\} \m \mathcal O(1)$ a cofibration of spaces. Let $O$ be the monad associated to $\mathcal O$. Let $A$ be an $\mathcal O$-algebra that is a well based space (inclusion of the base point is a cofibration) and let $M$ be an admissible $O$-functor. Then $B_*(M,O,A)$ is a proper simplicial space.
\label{theorem:admissPropper}
\end{proposition}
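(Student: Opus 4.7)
The plan is to verify directly the definition of properness: for each $k \ge 0$, I would show that the inclusion
\[
\bigcup_{i=0}^{k} s_i\bigl(B_k(M,O,A)\bigr) \hookrightarrow B_{k+1}(M,O,A) = MO^{k+1}A
\]
is a cofibration by producing an explicit NDR-pair representation $(h,u)$. Throughout, the key tool will be the fact that a closed inclusion is a cofibration exactly when it extends to an NDR-pair, together with Definition \ref{definition:admissable}, whose role is precisely to transport NDR structures through the $O$-functor $M$.

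I would begin with the base case, the unit map $\eta_X : X \m OX$ for a well-based $(X,x_0)$. The assumption that $\{id\} \hookrightarrow \mathcal{O}(1)$ is a cofibration combines with the well-basedness of $X$, via the standard NDR-pair calculus of products and quotients, to give an NDR representation for $(OX, \eta_X X)$. One must check that the proposed $(h,u)$ on each $\mathcal{O}(j) \times_{\Sigma_j} X^j$ descends through the identifications at $\mathcal{O}(0)$ that define $OX$, but this is essentially bookkeeping with the explicit formula for the monoidal product $\otimes$.

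Next I would iterate: applying the monad $O$ or its insertion variants to the base NDR-pair yields NDR representations for every pair $\bigl(O^{k+1}A,\, O^i \eta\, O^{k-i} A\bigr)$, using that $O$ is itself admissible as an $O$-functor in the sense of Definition \ref{definition:admissable}. Pushing forward through $M$ and applying admissibility of $M$ then produces NDR representations $(h_i, u_i)$ for each individual degeneracy image $\bigl(MO^{k+1}A,\, s_i(B_k(M,O,A))\bigr)$. The final step is to combine these into a single NDR representation for the union by the standard recipe: set $u(x) = \min_i u_i(x)$, so that $u^{-1}(0)$ is exactly the union, and define $h$ piecewise by following the $h_i$ whose $u_i$ realizes the minimum.

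The main obstacle will be the combinatorial compatibility in this last step. The intersections $s_i(B_k) \cap s_j(B_k)$ correspond, via the simplicial identities $s_i s_j = s_{j+1} s_i$ for $i \le j$, to iterated degeneracy loci, and I would need to verify that the $u_i$ and $h_i$ behave correctly on these overlaps so that the piecewise-defined $h$ is continuous and the combined pair really represents the union as an NDR-pair. Admissibility of $M$ is crucial here because the condition in Definition \ref{definition:admissable} controls how $Mu$ behaves on preimages under maps such as the composed degeneracies, and this is exactly what lets the min-combination survive the application of $M$ to yield the required representation on all of $MO^{k+1}A$.
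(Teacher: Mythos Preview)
The paper does not prove this proposition at all; it is quoted verbatim as a result of May \cite{M} (introduced with ``the following theorem in \cite{M}'') and no argument is supplied. So there is no proof in the paper to compare your proposal against, and your outline is in fact a sketch of May's original argument from the appendix of \cite{M}.

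That said, there is a genuine gap in your final combination step. Defining $u = \min_i u_i$ is fine, but your proposal to ``define $h$ piecewise by following the $h_i$ whose $u_i$ realizes the minimum'' does not produce a continuous homotopy: on the locus where two of the $u_i$ coincide, the corresponding $h_i$'s have no reason to agree, so the piecewise definition jumps. This is exactly why May's treatment is more elaborate than the sketch you give. What is actually needed is that the individual NDR representations $(h_i,u_i)$, built by propagating the unit-insertion NDR-pair through admissible functors, are \emph{compatible} in the technical sense May isolates (roughly, each $h_i$ preserves the sublevel sets of the other $u_j$'s and restricts appropriately on the other degeneracy images). Admissibility is set up precisely so that this compatibility is inherited when one applies $O$ and $M$, and with it one can build a single representation for the union either by iterated pushouts along the standard filtration of the degenerate subspace or by May's explicit blended formula---but not by a naive piecewise switch. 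Your diagnosis that ``the main obstacle will be the combinatorial compatibility'' is correct; the specific recipe you propose for resolving it is the part that fails.
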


May proved that $\Sigma^n$, $\Omega^n$ and monads associated to operads are admissible functors. He noted that composition of admissible functors are also admissible. We shall prove that the functor associated to $D(M)$ and $Map^c(M,\cdot)$ are admissible.

\begin{lemma}
The functor associated to $D(M)$ and $Map^c(M,\cdot)$ are admissible.

\label{lemma:MisAdmissable}
\end{lemma}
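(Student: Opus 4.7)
The strategy is to construct the required NDR-pair data $(Fh,Fu)$ functorially in each case: $Fu$ will be a maximum of $u$ over the ``internal points'' (labels of a configuration, respectively values of a map) of a point of $FX$, and $(Fh)_t$ will act by applying $h_t$ pointwise to those internal points. Under such definitions the identity $(Fh)_t = F(h_t)$ holds by construction, the equality $(Fu)^{-1}(\{0\}) = FA$ reduces to the pointwise statement that each label or value lies in $u^{-1}(\{0\}) = A$, and the strict-inequality clauses in Definition \ref{definition:admissable} reduce to the corresponding pointwise strict inequalities for $u$ and for $g$.

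For the functor associated to $D(M)$, a point is an equivalence class $[(e_1,\ldots,e_k;y_1,\ldots,y_k)]$ in which labels equal to the basepoint $x_0$ get absorbed into the module component. I would define
$$(Fu)\bigl([(e_1,\ldots,e_k;y_1,\ldots,y_k)]\bigr) \;=\; \max_{1 \leq i \leq k} u(y_i),$$
with the convention that the maximum of the empty set is $0$, and define $(Fh)_t$ by applying $h_t$ to each label. Since $u(x_0)=0$ and $h_t(x_0)=x_0$, both formulas descend to the quotient by the basepoint relation. The axioms $(Fh)_0=\mathrm{id}$ and $(Fh)_t|_{FA}=\mathrm{id}_{FA}$ are immediate. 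If $(Fu)([c])<1$, then $u(y_i)<1$ for every $i$, whence $h(1,y_i)\in A$, so $(Fh)_1([c])\in FA$. The same labelwise bookkeeping handles the $Fg$-clause.

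For $\mathrm{Map}^c(M,\cdot)$, I identify $\mathrm{Map}^c(M,Y)$ with based maps $M^+\to Y$ and set
$$(Fu)(f) \;=\; \max_{m\in M^+} u(f(m)), \qquad (Fh)_t(f) \;=\; h_t\circ f.$$
Since $M^+$ is compact, the maximum is attained, and $Fu$ is continuous in the compact-open topology since a convergent net $f_\alpha\to f$ induces uniform convergence of $u\circ f_\alpha \to u\circ f$ on the compact space $M^+$. The verification of the axioms parallels the $D(M)$ case: $(Fu)(f)=0$ iff $f$ factors through $A$, and $(Fu)(f)<1$ forces $h(1,f(m))\in A$ for every $m$. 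Compactness of $M^+$ is precisely what allows the strict inequality in the $Fg$-clause to be preserved, since the maximum is attained.

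The one point that requires genuine care is continuity of $Fu$ in the $D(M)$ case across the stratification by label count: as a label $y_i$ approaches $x_0$ and is absorbed into the module component, $u(y_i)\to u(x_0)=0$ by continuity of $u$, so the formula $\max_i u(y_i)$ extends continuously across the basepoint identification. Beyond this, the verifications of the axioms of an admissible functor are entirely formal and amount to pointwise manipulations with $u$ and $h$.
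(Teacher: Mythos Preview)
Your proposal is correct and follows essentially the same approach as the paper: define $Fu$ as the maximum of $u$ over the labels (for $D(M)$) or over the values (for $Map^c(M,\cdot)$), and let $(Fh)_t = F(h_t)$ act pointwise. The paper's own proof is in fact considerably terser than yours---it simply writes down these two formulas for $Fu$ and asserts that the conditions of Definition~\ref{definition:admissable} hold---so your additional discussion of well-definedness under the basepoint relation and of continuity across strata is extra care rather than a different argument.
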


\begin{proof}
Let $(h,u)$ represent $(X,A)$ as an NDR-pair. Define $Map^c(M, \cdot )u : Map^c(M, X) \m [0,1]$ by the formula $Map^c(M, \cdot )u (f) = Max_{m \in M} u(f(m))$. Define $D(M)u : D(M)X \m [0,1]$ by $(D(M)u)(e; x_1 \ldots x_k ) = max_i u(x_i)$. These functions satisfy the hypothesis of Definition \ref{definition:admissable}.

\end{proof}

Let $X$ be a based space and let $A_*$ be a simplicial space. The natural map $|Map^{\bullet}(X,A_*)| \m Map^{\bullet}(X,|A_*|)$ is not always a homotopy equivalence. In \cite{M}, May proved that if each $A_k$ is connected, $A_*$ is proper, and $X$ is a circle, then the map is a weak equivalence. In \cite{HM}, they proved that the map is a homotopy equivalence if $X$ is a simplicial complex, $A_*$ is proper and $A_k$ is $dim X$ connected for each $k$. We will generalize the last theorem by relaxing the connectivity of requirements on the spaces $A_k$ by one. These proofs use the notion of a simplicial Hurewicz fibration introduced in \cite{M}. A simplicial Hurewicz fibration is a condition on a map of simplicial spaces $f_*: E_* \m B_*$ that generalizes the notion of fibration of spaces. This definition is somewhat involved so we will not give it. However, we will state two theorems. In \cite{HM} (page 11), Hesselholt and Madsen observed that a particular class of maps are simplicial Hurewicz fibrations. In \cite{M} (Theorem 12.7), May proved useful properties of simplicial Hurewicz fibrations.

\begin{proposition}
Let $Z$ and $X$ be simplicial complexes and let $X = Z \cup e_l$ with $e_l$ an $l$-cell. Then for any proper simplicial space $A_*$, $Map^{\bullet}(X,A_*) \m Map^{\bullet}(Z,A_*)$ is a simplicial Hurewicz fibration with fiber $Map^{\bullet}(S^l,A_*)$.

\end{proposition}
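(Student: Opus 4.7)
The strategy is to produce a single path-lifting function, built from the NDR-pair structure on $(X, Z)$, that is natural in the target space and therefore automatically respects the simplicial structure on $A_*$.

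First I would observe that attaching a single $l$-cell gives a cofibration $Z \hookrightarrow X$, so $(X, Z)$ is an NDR-pair. Fix a representation $(h, u)$. The cofiber $X/Z$ is the one-point compactification of the open $l$-cell, namely $S^l$, and under the based convention we have a canonical identification of $Map^{\bullet}(S^l, A_k)$ with the subspace $\{f : X \m A_k : f(Z) = * \} \subset Map^{\bullet}(X, A_k)$. This subspace is exactly the strict fiber of the restriction map $\rho_k : Map^{\bullet}(X, A_k) \m Map^{\bullet}(Z, A_k)$ over the constant based map, so the desired fiber is identified at each simplicial level.

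Next I would apply the standard cofibration-fibration duality: the NDR data $(h, u)$ produces an explicit path-lifting function $\lambda_k$ for $\rho_k$ via universal formulas of the form $\lambda_k(f, \gamma)(t)(x) = \gamma(\min(t, u(x)))(h(\min(t,u(x)), x))$ (suitably interpolated with $f$), making each $\rho_k$ a Hurewicz fibration with the stated fiber. The crucial point is that these formulas involve $A_k$ only through evaluation, so $\lambda_k$ is natural in the target: any map $g : A_k \m A_{k'}$ satisfies $g_* \circ \lambda_k = \lambda_{k'} \circ g_*$. Applied to the face and degeneracy maps of $A_*$, this naturality says that the $\lambda_k$ assemble into a single lifting function compatible with the simplicial structure, which is precisely the content of being a simplicial Hurewicz fibration in the sense of \cite{M}. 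Properness of $A_*$ enters to ensure that the lifted homotopies interact correctly with the degeneracy subspaces, which is the compatibility built into May's definition.

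The main obstacle is not the levelwise fibration property, which is classical, but the bookkeeping required to check the full definition of a simplicial Hurewicz fibration from \cite{M}. In particular one must verify the axiom governing how the path-lifting interacts with degenerate simplices, and this is exactly the point where properness of $A_*$ (cofibration of the degeneracy subspaces $\cup s_i(A_i) \hookrightarrow A_{i+1}$) is used. Once this compatibility is established, the identification of the fiber as $Map^{\bullet}(S^l, A_*)$ level by level, together with naturality of $\lambda_k$, yields the claimed simplicial fiber.
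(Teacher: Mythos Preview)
The paper does not supply its own proof of this proposition; it is stated as a citation to Hesselholt--Madsen \cite{HM} (page 11), who ``observed that a particular class of maps are simplicial Hurewicz fibrations.'' Your sketch is in line with how that observation is made: the NDR data on $(X,Z)$ yields a path-lifting function for the restriction map that depends on the target only through postcomposition, hence is automatically compatible with the face and degeneracy maps of $A_*$; properness enters exactly where you say, in the interaction with degenerate simplices required by May's Definition 12.1 in \cite{M}. Since the paper defers the argument entirely to \cite{HM}, there is nothing further to compare.
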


In \cite{HM}, they also noted that $Map^{\bullet}(X,A_*)$ is proper if $A_*$ is.  

\begin{proposition}
Let $E_* \m B_*$ be a simplicial Hurewicz fibration with fiber $F_*$. If each $B_k$ is connected and $B_*$ is proper, then $|E_*| \m |B_*|$ is a quasi-fibration with fiber $|F_*|$.

\end{proposition}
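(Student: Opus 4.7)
The plan is to apply the Dold--Thom quasi-fibration recognition theorem to the skeletal filtration of $|B_*|$. Let $F_k|B_*|$ denote the image of $\bigsqcup_{j\le k} B_j\times\Delta^j$ in $|B_*|$; properness of $B_*$ ensures that each inclusion $F_{k-1}|B_*|\hookrightarrow F_k|B_*|$ is a closed cofibration, with the open complement identified with $(B_k\setminus\bigcup_i s_i(B_{k-1}))\times\mathring{\Delta}^k$. Pulling back yields an analogous filtration $F_k|E_*|$ on the total space, and the argument proceeds inductively one stratum at a time.

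Step one is to verify that over each open stratum the restricted map is a Hurewicz fibration with fiber $|F_*|$. Away from the degenerate subspaces, the stratum is literally a product $(B_k\setminus\bigcup_i s_i(B_{k-1}))\times\mathring{\Delta}^k$ with total space $(E_k\setminus\bigcup_i s_i(E_{k-1}))\times\mathring{\Delta}^k$, and $E_k\to B_k$ is a Hurewicz fibration by the definition of a simplicial Hurewicz fibration. Step two is the inductive gluing via the Dold--Thom gluing lemma: given that $|E_*|\to|B_*|$ is already a quasi-fibration over $F_{k-1}|B_*|$ with fiber weakly equivalent to $|F_*|$, extend to $F_k|B_*|$ using the NDR-pair structure from properness together with step one. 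Connectedness of each $B_k$ enters precisely here: it guarantees that a fiber above any point of the new stratum is coherently identified with $|F_*|$ by the gluing, so that the two descriptions of the fiber on opposite sides of the stratum boundary agree up to homotopy. Step three is to pass to the colimit over $k$, which is permissible because the filtration is by closed cofibrations and an expanding sequence of quasi-fibrations with compatible fibers remains a quasi-fibration in the colimit.

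The main obstacle lies in step two: one must show that the fibered path-lifting functions $\lambda_k \colon B_k \times_{B_k^I} E_k^I \to E_k$ furnished by the level-wise Hurewicz fibration property assemble, across all $k$, into a fiberwise path lifting that descends through the equivalence relation defining geometric realization. The axioms built into the definition of a simplicial Hurewicz fibration are designed precisely to enforce this compatibility with face and degeneracy maps; nonetheless, tracking it rigorously requires the full apparatus developed in Section 12 of \cite{M}, from which this proposition is essentially extracted.
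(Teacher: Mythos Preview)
The paper does not give a proof of this proposition at all; it is simply quoted as Theorem~12.7 of May~\cite{M}. Your proposal is therefore not competing with any argument in the paper --- you are correctly sketching the strategy of May's original proof, and you say as much in your final paragraph.

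As a sketch of May's argument your outline is broadly accurate: skeletal filtration of $|B_*|$, Dold--Thom recognition over each stratum, inductive gluing, passage to the colimit. One imprecision worth flagging is in Step~1: over the open $k$-stratum the fiber of $|E_*|\to|B_*|$ is $F_k$, not $|F_*|$. The identification of the levelwise fibers $F_k$ with one another and ultimately with $|F_*|$ is exactly what the connectedness hypothesis on each $B_k$ buys (via the path-lifting data in the simplicial Hurewicz fibration axioms), so this should be deferred to Step~2 rather than asserted in Step~1. Apart from that, your account of where connectedness and properness enter is right, and your caveat that the rigorous compatibility check is the substance of May's Section~12 is well placed.
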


Combining these two propositions, we get the following corollary.

\begin{corollary}

Let $X$ be a finite simplicial complex of dimension at most $n$. Assume that $A_*$ is a proper simplicial space and that each $A_k$ is $(n-1)$-connected. Then the map $ |Map^{\bullet}(X,A_*)| \m Map^{\bullet}(X,|A_*|) $ is a weak equivalence.
\label{corollary:mapcommute}
\end{corollary}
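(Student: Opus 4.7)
The plan is a cellular induction on $X$ combined with an iterated application of May's commutation theorem for the loop functor. The base case $X = \ast$ is trivial, both sides reducing to a point.

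For the inductive step, write $X = Z \cup_\phi e^l$ with $l \le n$ and $Z$ a subcomplex with strictly fewer cells. The first proposition gives a simplicial Hurewicz fibration $Map^{\bullet}(X, A_*) \to Map^{\bullet}(Z, A_*)$ with fiber $Map^{\bullet}(S^l, A_*) = \Omega^l A_*$, while on the space side the restriction map $Map^{\bullet}(X, |A_*|) \to Map^{\bullet}(Z, |A_*|)$ is a Hurewicz fibration with fiber $\Omega^l |A_*|$; the scanning natural transformation assembles these into a commutative comparison ladder.

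To obtain a long exact sequence of homotopy groups on the simplicial side via the second proposition, I need each level $Map^{\bullet}(Z, A_k)$ of the base to be path connected. Since $A_k$ is $(n-1)$-connected and $Z$ may have dimension as large as $n$, obstruction theory allows the class in $H^n(Z;\pi_n(A_k))$ to be nontrivial, so the mapping space need not be connected. I circumvent this by restricting throughout to the sub-simplicial space $Map^{\bullet}_0(-, A_*)$ whose level $k$ is the path component of the constant map; this is a sub-simplicial space because the face and degeneracy maps are induced by basepoint-preserving maps between the $A_k$ and hence preserve the class of the constant map. Since the weak-equivalence statement only involves homotopy groups at the basepoint, working with this restriction is sufficient.

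With the long exact sequences in place and the comparison morphism, the five-lemma reduces the inductive step to two facts. First, the base comparison $|Map^{\bullet}(Z, A_*)| \to Map^{\bullet}(Z, |A_*|)$ is a weak equivalence by the inductive hypothesis applied to $Z$. Second, the fiber comparison $|\Omega^l A_*| \to \Omega^l |A_*|$ is a weak equivalence: since each $A_k$ is $(n-1)$-connected and $l \le n$, each iterated loop space $\Omega^{j-1} A_k$ is $(n-j)$-connected and therefore connected for $j \le n$; iterating $l$ times May's theorem that $|\Omega Y_*| \to \Omega |Y_*|$ is a weak equivalence for proper simplicial based spaces with connected levels yields the desired equivalence.

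The main obstacle is the possible disconnectedness of $Map^{\bullet}(Z, A_k)$ when $Z$ contains top-dimensional cells, which is exactly the slack being picked up by relaxing the Hesselholt--Madsen hypothesis by one. Restricting to the sub-simplicial space of basepoint components is the technical device that allows the quasi-fibration machinery of the second proposition to proceed in this weakened connectivity regime.
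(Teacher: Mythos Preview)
Your cell-by-cell induction runs into exactly the difficulty you identify, but the proposed fix---restricting to the basepoint component $Map^{\bullet}_0(-,A_*)$---does not rescue it. Two problems: first, the propositions quoted from May and Hesselholt--Madsen concern the simplicial Hurewicz fibration $Map^{\bullet}(X,A_*)\to Map^{\bullet}(Z,A_*)$, and it is not clear that the restriction $Map^{\bullet}_0(X,A_*)\to Map^{\bullet}_0(Z,A_*)$ is again a simplicial Hurewicz fibration with fiber $\Omega^l A_*$ (indeed, over the constant map the fiber of the restricted map is only the null-homotopic component $\Omega^l_0 A_k$ when $l=n$, and the fibers over other points of the base need separate analysis). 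Second, a weak equivalence must induce a bijection on $\pi_0$, and neither $|Map^{\bullet}(X,A_*)|$ nor $Map^{\bullet}(X,|A_*|)$ is connected in general (the obstruction lies in $H^n(X;\pi_n)$), so restricting to one component does not establish the statement.

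The paper avoids the issue entirely by choosing the decomposition differently: take $Z=X_{n-1}$, the $(n-1)$-skeleton, so that $X/Z=\bigvee S^n$. Now $Map^{\bullet}(X_{n-1},A_k)$ \emph{is} connected for every $k$, since $A_k$ is $(n-1)$-connected and $X_{n-1}$ has dimension $n-1$; the second proposition therefore applies directly and yields a quasi-fibration $|Map^{\bullet}(\bigvee S^n,A_*)|\to|Map^{\bullet}(X,A_*)|\to|Map^{\bullet}(X_{n-1},A_*)|$. The base comparison is a weak equivalence by the Hesselholt--Madsen lemma (their hypothesis is met exactly because $\dim X_{n-1}\le n-1$), and the fiber comparison is handled by May's theorem for a wedge of $n$-spheres, as in your argument. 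The five-lemma then finishes. In short, the right move is not to restrict the mapping spaces but to arrange the single decomposition so that the base subcomplex has dimension at most $n-1$.
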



\begin{proof} Let $X_{n-1}$ be the $n-1$-skeleton of $X$. The quotient $X/X_{n-1}$ is a wedge of spheres $\bigvee S^n$. For any space $Y$, $Map^{\bullet}(\bigvee S^n,Y) \m Map^{\bullet}(X,Y) \m Map^{\bullet}(X_{n-1},Y)$ is a fibration sequence since $X_{n-1} \m X$ is a cofibration. This is true in particular when $Y=|A_*|$. Since every $A_k$ is $n-1$ connected and $X_{n-1}$ is $n-1$ dimensional, the spaces $Map^{\bullet}(X_{n-1},A_k)$ are connected. The sequence $|Map^{\bullet}(\bigvee S^n,A_*)| \m |Map^{\bullet}(X,A_*)| \m |Map^{\bullet}(X_{n-1},A_*)|$ is a quasi-fibration sequence since $Map^{\bullet}(\bigvee S^n,A_*) \m Map^{\bullet}(X,A_*) \m Map^{\bullet}(X_{n-1},A_*)$ is a simplicial Hurewicz fibration, $A_*$ is proper and $Map^{\bullet}(X_{n-1},A_k)$ is connected for every $k$. Consider the following commuting diagram of quasi-fibrations: $$
\begin{array}{ccccccccl}
|Map^{\bullet}(\bigvee S^n,A_*)| & \m  & Map^{\bullet}(\bigvee S^n,|A_*|)               \\
 \downarrow  &    & \downarrow             \\

|Map^{\bullet}(X,A_*)| & \m  & Map^{\bullet}(X,|A_*|)  \\

 \downarrow  &    & \downarrow             \\

|Map^{\bullet}(X_{n-1},A_*)| & \m  & Map^{\bullet}(X_{n-1},|A_*|) .

\end{array}$$ The map $|Map^{\bullet}(\bigvee S^n,A_*)| \m Map^{\bullet}(\bigvee S^n,|A_*|)$ is a weak equivalence by \cite{M} (Theorem 12.3) and $|Map^{\bullet}(X_{n-1},A_*)| \m Map^{\bullet}(X_{n-1},|A_*|)$ is a weak equivalence by \cite{HM} (Lemma 1.4). Thus,  $|Map^{\bullet}(X,A_*)| \m Map^{\bullet}(X,|A_*|)$ is a weak equivalence.
\end{proof}

\FloatBarrier
\subsection{Classical scanning theorems for configuration spaces}

To prove the recognition principle, May proved the so called approximation theorem. Namely he proved that $s : D_n X \m \Omega^n \Sigma^n X$ is a weak equivalence for $X$ connected and well based \cite{M}. In order to prove nonabeilian Poincar\e duality theorems, we will need similar results concerning the scanning map $s : D(M) X \m Map(M,\Sigma^n X)$. Thus, we review some facts about classical configuration spaces proved in \cite{Mc1} and \cite{B}.

\begin{definition}
Let $M$ be a parallelized $n$-manifold. Let $C(M)$ be the $\Sigma$-space with $C(M)(k)= M^k - \Delta_{fat}$ with $\Delta_{fat}$ denoting the fat diagonal.   
\end{definition}

Note that the map $c:D(M)(k) \m C(M)(k)$ sending an embedding to its center is a homotopy equivalence and so $c$ induces a homotopy equivalence between $D(M)X \m C(M)X$ for all well based spaces $X$. In \cite{B}, B{\"o}digheimer defined scanning maps $s'$ making the following diagram homotopy commute: $$
\begin{array}{ccccccccl}
D(M)X  &\overset{c}{\m} & C(M)X \\
 & s \searrow & s' \downarrow  &            \\

  & & Map^c(M,\Sigma^n X) \\

\end{array}.$$ B{\"o}digheimer also generalizes May's approximation theorem and proves the following theorem \cite{B}. 

\begin{theorem}
If $X$ is connected, $M$ a parallelizable $n$-manifold, then $s': C(M)X \m Map^c(M,\Sigma^n X)$ is a homotopy equivalence.
\label{theorem:connectedConfig}

\end{theorem}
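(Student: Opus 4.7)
The plan is to argue by induction on a handle decomposition of $\bar M$, reducing to the case $M = \R^n$, which is May's classical approximation theorem. Write $\bar M$ as built from a sequence of handle attachments, and let $M_k$ denote the interior of the partial result after attaching the first $k$ handles. The base case $M_0 = \R^n$ is May's approximation theorem $C(\R^n)X \simeq \Omega^n \Sigma^n X$, available since $X$ is assumed connected and well based.

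For the inductive step, write $M_k = U \cup V$ where $U = M_{k-1}$ and $V$ is an open tubular neighborhood of the $k$-th handle diffeomorphic to $\R^n$. Both sides of $s'$ behave well with respect to this cover. The mapping-space side forms a homotopy pullback square
\[
\begin{array}{ccc}
Map^c(M_k, \Sigma^n X) & \m & Map^c(V, \Sigma^n X) \\
\downarrow & & \downarrow \\
Map^c(U, \Sigma^n X) & \m & Map^c(U \cap V, \Sigma^n X),
\end{array}
\]
because a compactly supported map on $M_k$ is specified by a compatible pair of compactly supported maps on $U$ and $V$. On the configuration side, one realizes the analogous homotopy pullback by means of McDuff's ``letting particles escape to infinity'' construction: restricting particles to $V$ (and discarding those lying in $U \setminus V$) gives a map $C(M_k)X \m \bar C(V, \partial V)X$ into a relative configuration space whose homotopy fiber over the basepoint is $C(U)X$, and similarly over $U \cap V$. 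The scanning natural transformation produces a map of pullback squares, and combining the inductive hypothesis on $U$, $V$, and $U \cap V$ with the five lemma closes the induction.

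The main obstacle is verifying that the restriction map $C(M_k)X \m \bar C(V, \partial V)X$ is a quasi-fibration. One applies Dold's criterion to the natural stratification of the target by particle count, and this is exactly where the connectedness of $X$ is used: in order to continuously decrease the number of particles one must be able to deform a label along a path to the basepoint. This quasi-fibration property is essentially the content of the scanning arguments of \cite{Mc1} and \cite{B}; once it is in hand, the comparison via the five lemma is a routine diagram chase, and the induction on handles produces the theorem for every open parallelizable $n$-manifold $M$.
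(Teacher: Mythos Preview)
The paper does not supply its own proof of this statement: it is quoted as a theorem of B{\"o}digheimer \cite{B}, generalizing May's approximation theorem, and is used as a black box in the subsequent arguments. So there is no proof in the paper to compare against directly.

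That said, your sketch is essentially the argument B{\"o}digheimer gives. The inductive scheme over a handle decomposition, the reduction of the base case to May's approximation theorem, and the identification of the quasi-fibration property (via the Dold--Thom criterion, using connectedness of $X$ to allow labels to slide to the basepoint) are exactly the ingredients of \cite{B}. One small correction: the configuration-space side is not naturally organized as a homotopy pullback square matching the Mayer--Vietoris square of mapping spaces. There is no restriction map $C(M_k)X \to C(U)X$; what one has instead is the quasi-fibration sequence
\[
C(U)X \longrightarrow C(M_k)X \longrightarrow C(M_k,U)X
\]
of relative configuration spaces, compared via $s'$ to the restriction fibration $Map^c(U,\Sigma^n X) \to Map^c(M_k,\Sigma^n X) \to Map^c(M_k - U,\Sigma^n X)$. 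This is the form in which B{\"o}digheimer runs the induction (see his Proposition~2, Steps~1--4), and once phrased this way your five-lemma argument goes through. You should also note that for noncompact $M$ one needs a further colimit/exhaustion step after the handle induction, which B{\"o}digheimer carries out in the later steps of the same proposition.
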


If $X$ is not connected, then $s$ and $s'$ are not homotopy equivalences. However, if $M$ is open, they are what we shall call a ``stable'' homology equivalence. The word stable does not mean stable in the sense of stable homotopy theory. For simplicity we assume that $M$ is connected and is the interior of a (not necessarily compact) manifold $\bar M$ with connected boundary $\partial M$. However, everything can be generalized to the case when $\pi_0(\partial M) \m \pi_0(\bar M)$ is onto.


Let $M' = \bar M \cup_{\partial M} \partial M \times [0,1)$. Fix a diffeomorphism $d: M' \m M$ whose inverse is isotopic to the inclusion $M \hookrightarrow M'$. Given $x \in X$ and $p \in \partial M \times [0,1)$, there is an induced map $t_x: C(M)X \m C(M)X$ defined as follows. Send a configuration $(m_1, \ldots m_k; x_1 \ldots x_k)$ to $(d(m_1), \ldots d(m_k),d(p); x_1 \ldots x_k,x)$. Up to homotopy, $t$ only depends on $[x] \in \pi_0(X)$. Let $f_x:\partial M \times [0,1) \m \Sigma^n X$ be $s(p;x)$. Let $T_x :Map^c(M,\Sigma^n X) \m Map^c(M,\Simga^n X)$ be the following function:

$$T_x(f)(m)=
\begin{cases}
f(d^{-1}(m))  & \mbox{if } d^{-1}(m) \in M \\
f_x(d^{-1}(m))  & \mbox{if } d^{-1}(m) \notin M . \\
\end{cases}
$$

Let $\{x_i\}$ be a sequence of not necessarily distinct elements of $X$ such that each connected component of $X$ has infinitely many terms of the sequence. The natural numbers $\N$ are a partially ordered set and hence a category. Let $\mathfrak C: \N \m Top$ be a functor that takes each object to $C(M)X$ and sends morphisms $j \m j+1$ to maps $t_{x_j}$. Likewise define $\mathfrak M:\N \m Top$ to be the functor which takes objects to $Map^c(M,\Sigma^n X)$ and morphisms to the maps $T_{x_{j}}$. In \cite{Mc1}, McDuff proved the following theorem in the case that $X=S^0$.

\begin{theorem}
If $M $ is a connected parallelizable manifold, $\partial M$ is non-empty and $dim_{\R} M>1$, then $s':hocolim_{\N} \mathfrak C \m hocolim_{\N} \mathfrak M$ is a homology equivalence.

\label{theorem:limitConfig}
\end{theorem}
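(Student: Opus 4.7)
The plan is to adapt McDuff's handle-induction argument in \cite{Mc1}, which treats $X = S^0$, to arbitrary well-based labels $X$. First I would dispatch the local model $M = \R^n$. Here $C(\R^n) X$ is the free $E_n$-algebra on $X$, and the stabilization maps $t_{x_i}$ realize multiplication by a generating set of $\pi_0 C(\R^n)X$. Since every component of $X$ appears infinitely often in $\{x_i\}$, the hocolim of $\mathfrak{C}$ computes the group completion of $C(\R^n) X$ on homology in the sense of \cite{MSe}. The group completion theorem, combined with the identification $B^n C(\R^n) X \simeq \Sigma^n X$ (the recognition principle for the free $E_n$-algebra, valid since $n > 1$), yields a homology equivalence $hocolim_{\N} \mathfrak{C} \to \Omega^n \Sigma^n X = Map^c(\R^n, \Sigma^n X)$. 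A direct check identifies this map with $s'$.

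For a general open $M$ with non-empty boundary, I would induct on a handle decomposition of $\bar M$. Because $\partial M \neq \varnothing$, the decomposition can be chosen with all handles of index at most $n-1$, so each new handle can be attached while particles are pushed into the collar $\partial M \times [0,1)$. Write $M = M' \cup H$ with $H$ the last handle and $M'$ the interior of the previous stage. On the mapping space side, the cofibration $M' \hookrightarrow M$ yields a homotopy fiber sequence
\[ Map^c(M - M', \Sigma^n X) \to Map^c(M, \Sigma^n X) \to Map^c(M', \Sigma^n X), \]
preserved by $T_{x_i}$. On the configuration side there is a parallel relative sequence involving $C(M')X$, $C(M)X$, and a relative configuration space supported near $H$. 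Comparing these two quasi-fibration sequences via the scanning maps, and applying the inductive hypothesis both on $M'$ and on the handle (which reduces to the base case), then completes the inductive step.

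The main obstacle will be arranging that the configuration space sequence actually becomes a homology quasi-fibration after stabilization, with fiber identified with the stabilized relative configuration space near $H$. Before stabilization, configurations in $M$ have fixed cardinality and cannot ``leak'' between $M'$ and $H$, so the restriction map is far from a fibration. After stabilization, however, the collar-translation trick from \cite{Mc1} lets particles be freely introduced or removed in a neighborhood of $\partial M$; this turns the restriction into a homology quasi-fibration, because any configuration in $M$ can be slid so that its $M'$-part is arbitrary. Labels are merely transported by these slides, so the unlabeled argument extends verbatim, with the combinatorics of $\{x_i\}$ guaranteeing that all label classes appear in the hocolim.
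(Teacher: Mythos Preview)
The paper does not actually prove this theorem: it states that McDuff proved the case $X=S^0$ in \cite{Mc1} and asserts that ``all of her arguments follow almost verbatim for general well based $X$,'' referring to B{\"o}digheimer \cite{B} for details. So there is no internal proof to compare against beyond the cited structure of McDuff's and B{\"o}digheimer's arguments.

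Your sketch is in the right spirit but organizes the argument differently from McDuff/B{\"o}digheimer (and from the paper's own analogous argument in Section~3 for Salvatore's model). The standard route splits into two independent pieces: first, a handle induction proving that the \emph{relative} scanning map $s': C(M,N)X \to \Gamma^c_{(M,N)}(\Sigma^n X)$ is a genuine weak equivalence whenever $\pi_0(N)\to\pi_0(M)$ is onto (no stabilization needed here, since points can vanish into $N$); second, a \emph{single} homology-fibration comparison, using a collar $Q$ of $\partial M$, between the sequences $C(Q)X \to C(M)X \to C(M,Q)X$ and $\Gamma^c_Q \to \Gamma^c_M \to \Gamma^c_{(M,Q)}$. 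The base of the left sequence is handled by the relative theorem, the fiber by group completion on $C(Q)X \simeq C(\R^n)X$, and the stabilization enters only to make the left column a homology fibration. Your proposal instead runs the handle induction directly on the stabilized absolute spaces, which forces you to establish a homology quasi-fibration and to reconcile stabilization maps for $M'$ versus $M$ at every inductive step rather than once. This can be made to work, but it is more delicate; in particular your displayed fiber sequence $Map^c(M-M',\Sigma^n X)\to Map^c(M,\Sigma^n X)\to Map^c(M',\Sigma^n X)$ is not correct as written (compactly supported maps do not restrict along open inclusions, and the fiber over an attached handle is a relative mapping space, not $Map^c$ of the complement). Separating the relative weak-equivalence step from the single stabilization step, as in \cite{Mc1}, \cite{B}, and Section~3 of this paper, avoids these bookkeeping issues.
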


However, all of her arguments follow almost verbatim for general well based $X$. See \cite{B} for more details. We shall describe the above theorem by saying that $s: C(M)X \m Map^c (M,\Sigma^n X)$ is a stable homology equivalence. We call the maps $t_x$ and $T_x$ stabilization maps. Note that each $T_x$ are homotopy equivalences. Thus $hocolim_{\N} \mathfrak M$ is homotopy equivalent to $Map^c(M, \Sigma^n X)$.

\FloatBarrier
\subsection{Scanning with a connected algebra}

The goal of this section is to prove that the scanning map $s:B(D(M), D_n, A) \m Map^c(M,B^n A)$ is a weak homotopy equivalence when $A$ is connected. The proof follows May's proof of the approximation theorem in \cite{M}. This is a special case of the Lurie's nonabelian Poincar\e duality theorem from \cite{Lu} since all connected $D_n$-algebras are grouplike. Despite being less general than theorems already appearing in the literature, we give this proof as warm up to the case when $A$ is not necessarily grouplike. 

\begin{theorem}
If $A$ is a connected $D_n$-algebra, $M$ a parallelized $n$-manifold, then the scanning map $s:B(D(M), D_n, A) \m Map^c(M,B^n A)$ is a weak homotopy equivalence.
\end{theorem}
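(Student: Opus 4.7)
The plan is to factor the scanning map as
$$B(D(M), D_n, A) \xrightarrow{\varphi} B(Map^c(M, \Sigma^n \cdot), D_n, A) \xrightarrow{\psi} Map^c(M, B^n A),$$
where $\varphi = B(s, \mathrm{id}, \mathrm{id})$ is induced by the scanning natural transformation of $D_n$-functors, and $\psi$ is the assembly map $|Map^{\bullet}(M^+, A_*)| \to Map^{\bullet}(M^+, |A_*|)$ applied to $A_* := B_*(\Sigma^n \cdot, D_n, A)$, with $M^+$ the one-point compactification of $M$. By construction this composite is the scanning map $s$, so it suffices to show that $\varphi$ and $\psi$ are each weak homotopy equivalences.

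For $\varphi$, at simplicial degree $k$ the map is the classical scanning map $D(M)(O^k A) \to Map^c(M, \Sigma^n O^k A)$. One checks inductively that $O^k A$ is connected whenever $A$ is, using that each $O(-)$ is built from products and $\Sigma_j$-quotients of the connected pieces $D_n(j)$ and copies of the input, with basepoint insertions gluing the components indexed by different $j$ into a single path component. Composing with the center-of-disk equivalence $D(M)(O^k A) \to C(M)(O^k A)$ and invoking B\"odigheimer's Theorem \ref{theorem:connectedConfig} then exhibits $\varphi$ as a levelwise weak equivalence. Both simplicial spaces are proper: Lemma \ref{lemma:MisAdmissable} gives admissibility of $D(M)$ and of $Map^c(M, \cdot)$, May's admissibility of $\Sigma^n$ together with closure of admissibility under composition yields admissibility of $Map^c(M, \Sigma^n \cdot)$, and Proposition \ref{theorem:admissPropper} then applies. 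Theorem \ref{theorem:levelwiseproper} promotes the levelwise equivalence to a weak equivalence on geometric realizations.

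For $\psi$, each $A_k = \Sigma^n O^k A$ is $(n-1)$-connected since it is an $n$-fold suspension of a well-based space, and $A_*$ is proper by the same admissibility package applied to $\Sigma^n$. When $M^+$ admits the structure of a finite simplicial complex of dimension at most $n$, Corollary \ref{corollary:mapcommute} with $X = M^+$ delivers $\psi$ as a weak equivalence directly.

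The main technical obstacle is extending Corollary \ref{corollary:mapcommute} to manifolds whose one-point compactification need not be a finite simplicial complex. To handle this I would exhaust $M$ by an increasing sequence of relatively compact open submanifolds $M_1 \subset M_2 \subset \cdots$ chosen so that each $M_i^+$ is a finite simplicial complex of dimension $n$, note that $Map^c(M, Y)$ is the colimit of the $Map^c(M_i, Y)$ along the canonical extend-by-basepoint maps, and check that geometric realization commutes with this particular sequential colimit (the maps involved are inclusions of closed based subspaces, so the telescope argument applies on both sides). The finite-complex case then forces $\psi$ to be a weak equivalence in general, and combining the two steps gives that $s = \psi \circ \varphi$ is a weak homotopy equivalence.
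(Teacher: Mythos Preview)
Your proof is correct and follows essentially the same two-step factorization as the paper's own argument: first show $\varphi$ is a levelwise equivalence between proper simplicial spaces via admissibility and Theorem~\ref{theorem:connectedConfig}, then show $\psi$ is an equivalence via Corollary~\ref{corollary:mapcommute}. You are in fact slightly more careful than the paper in two places: you explicitly justify why $O^kA$ is connected (the paper invokes Theorem~\ref{theorem:connectedConfig} without comment), and you flag and address the finite-simplicial-complex hypothesis on $M^+$ in Corollary~\ref{corollary:mapcommute}, which the paper applies without remark.
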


\begin{proof} 
Consider the map of simplicial spaces: $$B_*(D(M), D_n, A) \m B_*(Map^c(M,\Sigma^n \cdot), D_n, A).$$ By Lemma \ref{lemma:MisAdmissable} and Proposition \ref{theorem:admissPropper}, both simplicial spaces are proper. By Theorem \ref{theorem:connectedConfig} and the fact that $D(M)X$ is homotopic to $C(M)X$, the map is a levelwise homotopy equivalence. Thus by Theorem \ref{theorem:levelwiseproper}, it induces a weak equivalence on geometric realizations. Now consider the map: $$B(Map^c(M,\Sigma^n \cdot), D_n, A) \m Map^c(M, B(\Sigma^n , D_n, A)).$$ Since $B_*(\Sigma^n , D_n, A)$ is a proper simplicial space and $\Sigma^n D^k_n A$ is $n$-connected (and hence $n-1$ connected) for every $k$, by Corollary \ref{corollary:mapcommute} we can conclude that this map is a weak homotopy equivalence. Since the scanning map is the composition of these two maps, it too is a weak  homotopy equivalence.
\end{proof}

\FloatBarrier
\subsection{Scanning with an open manifold}

In this subsection, we no longer assume that the $D_n$-algebra $A$ is connected or even grouplike. As before, we assume that $M$ is a smooth, connected, open, parallelizable $n$-manifold with $n>1$. We also assume that $M$ is the interior of a (not necessarily compact) manifold $\bar M$ with connected boundary. Note that insisting that the boundary of $\bar M$ is connected is not a condition on $M$ since we do not assume that $\bar M$ is compact. The goal of this section is to prove that the scanning map $s:B(D(M), D_n, A) \m Map^c(M,B^n A)$ is a stable homology equivalence. Here stable is in the sense of Subsection 2.5. Intuitively, we want to use the same proof strategy as was used in the case where $A$ is connected except using Theorem \ref{theorem:limitConfig} instead of Theorem \ref{theorem:connectedConfig}. See Subsection 2.5 for definitions of the stabilization maps $t_x: C(M)X \m C(M)X$ and $T_x : Map^c(M; \Sigma^n X) \m Map^c(M;\Sigma^n X)$.

For $\alpha \in B_k(D(\partial M \times (0,1)), D_n,A)$, we can define a stabilization map $t_{\alpha}:  B_k(D(M), D_n,A) \m  B_k(D(M), D_n,A)$ in a similar way to the way $t_x: C(M)X \m C(M)X$ was defined. That is, taking the ``union'' of an element $\xi \in B_k(D(M),D_n,A)$ with the element $\alpha \in B_k(D(\partial M \times (0,1)), D_n,A)$ gives an element of $B_k(D(\bar M \cup_ {\partial M} \partial M \times [0,1)), D_n,A)$. Using a diffeomorphism $\bar M \cup_ {\partial M} \partial M \times (0,1) \m M$ produces an element of  $B_k(D(M), D_n,A)$. For future use, require that the inverse of this diffeomorphism is isotopic to the inclusion $M \hookrightarrow \bar M \cup_ {\partial M} \partial M \times [0,1)$.

For $a \in A$, we shall define a stabilization map $t_a : B_*(D(M),D_n,A) \m B_*(D(M),D_n,A)$ as follows. Let $a^0 \in D(\partial M \times (0,1))A$ be a disk labeled by $a$. Let $a^k \in B_k(D(\partial M \times (0,1)), D_n, A)$ be the image of $a^0$ under the composition of $k$ degeneracy maps. Let $t_{a}:B_*(D(M),D_n,A) \m B_*(D(M),D_n,A)$ be the map induced by $t_{a^k} : B_k(D(M),D_n,A) \m B_k(D(M),D_n,A)$. 
Let $\{a_i\}$ be a sequence of not necessarily distinct elements of $A$ such that each connected component of $A$ has infinitely many terms of the sequence. Let $\mathfrak C: \N \m Top$ be a functor that takes each object to $B(D(M),D_n,A)$ and sends morphisms $j \m j+1$ to maps $t_{a_j}$. Likewise define $\mathfrak M:\N \m Top$ to be the functor which takes objects to $Map^c(M,B^n X)$ and morphisms to the maps $T_{a_j}$ (defined in the analogous way using the maps $T_{a^k_j}:B_k(Map^c(M,\Sigma^n \cdot),D_n,A) \m B_k(Map^c(M,\Sigma^n \cdot),D_n,A))$. 

Also define $\mathfrak C_k$ and $\mathfrak M_k$ to be the analogous functor sending every natural number to $ B_k(D(M),D_n,A)$ and $B_k(Map^c(M,\Sigma^n \cdot),D_n,A)$ respectively. Let $C_*$ be the simplicial space with $C_k =hocolim_{\N} \mathfrak C_k$ and face maps and degeneracies induced by the face maps and degeneracies of $B_*(D(M),D_n,A)$. Likewise define $M_*$ to be the simplicial space with $M_k =hocolim_{\N} \mathfrak M_k$. We will show that $|C_*|$ is homology equivalent to $|M_*|$. 

We will need to compare the maps  $t_{\alpha}$ with the maps  $t_{a^k}$. Let $\partial_k: H_*(B_k(D(M), D_n,A)) \m H_*(B_{k-1}(D(M), D_n,A))$ be the alternating sum of the facemaps.

\begin{lemma}
For all $x \in ker( \partial_k)$ and $\alpha \in B_k(D(\partial M \times (0,1)), D_n,A)$, there exist $a \in A$ such that $t_{\alpha*}(x) -t_{a^k*}(x) \in im(\partial_{k+1})$. In other words, $t_{\alpha*}$ and $t_{a^k*}$ induce the same map on the $E_2$ page of the Segal spectral sequence for the homologies of $|C_*|$ and $|M_*|$.
\label{lemma:stabequiv}
\end{lemma}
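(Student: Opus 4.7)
The plan is to construct, for each $\alpha$, an element $a \in A$ together with an explicit $(k{+}1)$-chain $y \in H_p(B_{k+1}(D(M), D_n, A))$ satisfying $\partial_{k+1}(y) = t_{\alpha *}(x) - t_{a^k *}(x)$. First, I would extract $a$ from $\alpha$ as a ``total label'': writing $\alpha = (f_1, \dots, f_l; Z_1, \dots, Z_l)$ in $D(\partial M \times (0,1))\, D_n^k A$, I would iterate the innermost face map $d_0$ to reduce each $Z_i \in D_n^k A$ to an element $a_i \in A$, then choose a single large disk $F \subset \partial M \times (0,1)$ containing the images of all the $f_i$, set $\hat f_i := F^{-1} \circ f_i \in D_n(l)$, and define $a := \mu(\hat f_1, \dots, \hat f_l)(a_1, \dots, a_l) \in A$ using the $D_n$-algebra structure of $A$. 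This choice should ensure that $\alpha$ and $a^k$ represent the same class in $\pi_0$ of $\int_{\partial M \times (0,1)} A$.

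Next, I would build the witness $\tilde\alpha \in B_{k+1}(D(\partial M \times (0,1)), D_n, A)$ by nesting $\alpha$ inside $F$: set $\tilde\alpha := (F; (\hat f_1, \dots, \hat f_l; Z_1, \dots, Z_l))$, which lies in $D(\partial M \times (0,1))\, D_n\, D_n^k A = B_{k+1}$. By definition of the face maps, the outermost one satisfies $d_{k+1}(\tilde\alpha) = (F \circ \hat f_i;\, Z_i) = \alpha$. I would then take $y := (-1)^{k+1} (t_{\tilde\alpha})_*(s_k(x))$ with $s_k$ the $k$-th degeneracy of $B_*(D(M), D_n, A)$, and expand $\partial_{k+1}(y) = \sum_{i=0}^{k+1} (-1)^i (d_i)_* (t_{\tilde\alpha})_*(s_k(x))$ using three identities: (i) $d_i \circ t_{\tilde\alpha} = t_{d_i \tilde\alpha} \circ d_i$, valid because stabilization commutes with face maps on the nose (face maps act componentwise on the disjoint union); (ii) the simplicial identities $d_i s_k = s_{k-1} d_i$ for $i < k$ and $d_k s_k = d_{k+1} s_k = \operatorname{id}$; and (iii) the cycle condition $\partial_k(x) = 0$, which will cause the middle contributions to telescope.

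The hardest part will be aligning the boundary computation so that the remaining terms of $\partial_{k+1}(y)$ collapse to exactly $t_{\alpha *}(x) - t_{a^k *}(x)$. A single nesting does not literally achieve this: for instance $d_k(\tilde\alpha)$ composes each $\hat f_i$ with the innermost $D_n$-components of the corresponding $Z_i$, which is not identically $a^k$. My proposed workaround is to iterate the nesting construction, producing a chain in $B_*(D(\partial M \times (0,1)), D_n, A)$ whose simplicial boundary expresses $\alpha - a^k$ modulo degeneracies, and then to pair this chain with the cycle $x$ via the stabilization map $t_\bullet$ to produce the desired $y$. The technical heart of the argument will be the bookkeeping of iterated $E_n$-multiplications and signs required to check that the intermediate face maps are either absorbed into the $t_{a^k}$ term or killed by the cycle condition on $x$.
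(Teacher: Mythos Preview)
Your $\tilde\alpha$ (nesting $\alpha$ inside one large disk $F$) is exactly what the paper builds; it calls this ``$\alpha$ with an extra outer wall'' and denotes it $\beta$, and it likewise pairs $\beta$ with a degeneracy of $x$. The iteration you anticipate is the paper's downward induction on the number $N$ of outer walls already present in $\alpha$, with base case $N=k{+}1$ (your observation that then $t_\alpha\simeq t_{a^k}$ on the nose). So the strategy is the same; two refinements are needed to turn your sketch into an actual argument.

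First, the degeneracy varies with the inductive step: at step $N$ one uses $s_{k-N}\,x$, not always $s_k x$. The reason is that $d_i\beta=\alpha$ exactly for $i\ge k{+}1{-}N$, while for $i\le k{-}N$ the face $d_i\beta$ acquires $N{+}1$ outer walls and hence falls under the inductive hypothesis; the choice $l=k{-}N$ is what makes the split line up with the simplicial identities $d_l s_l=d_{l+1}s_l=\mathrm{id}$.

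Second, the cycle condition $\partial_k x=0$ does \emph{not} by itself make the middle terms telescope, contrary to your item (iii). The paper instead computes a second quantity $\partial_{k+1}(t_{a^{k+1}}\,s_l x)=t_{a^k}\,\partial_{k+1}(s_l x)$ --- valid because the globally defined stabilization $t_a$ \emph{is} a map of simplicial spaces --- and subtracts it from $\partial_{k+1}(t_\beta\,s_l x)$. After subtraction the terms with $i\le k{-}N$ match by the inductive hypothesis (working modulo degenerates, so that $d_i s_l x$ for $i\notin\{l,l{+}1\}$ drops out), and what survives is $(t_{\alpha*}-t_{a^k*})(y)$ with $y=\sum_{i\ge k{+}1{-}N}\pm d_i s_l x$, which the simplicial identities collapse to $\pm x$. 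So it is this subtraction trick, together with the induction, that clears the cross terms; the ``bookkeeping'' you defer is precisely this.
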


\begin{proof}
For $\alpha, \alpha' \in B_k(D(\partial M \times (0,1)), D_n,A)$, we say  $t_{\alpha*}$ and $t_{\alpha'*}$  are homologous to mean   $t_{\alpha*}(x) -t_{\alpha'*}(x) \in im(\partial_{k+1})$ for all $x \in ker( \partial_k)$. An element $\alpha \in B_k(D(\partial M \times (0,1)), D_n,A)$ includes the data of a collection of embeddings $e \in D(\partial M \times (0,1))$ and elements $e_{ij} \in D_n$ for $0 \leq i \leq k $ and elements $a_l \in A$. We order the $e_{ij}$ so that the elements $(e_{k1}, e_{k2} \ldots ; a_1, a_2 \ldots)$ define elements of $D_n A$ and $(e_{k-1,1}, e_{k-1 ,2} \ldots ;  e_{k1}, e_{k2} \ldots ; a_1, a_2 \ldots)$ define elements of  $D_n(D_n A)$, et cetera. For all $N>0$, we say that $\alpha \in B_k(D(\partial M \times (0,1)), D_n,A)$ has at least $N$ outer walls if $e \in D(\partial M \times (0,1) )(1)$ and each $e_{ij} \in D_n(1)$ for $i<N$ (See Figure \ref{Walls}). We say that all elements have at least zero outer walls. For $\alpha, \alpha' \in B_k(D(\partial M \times (0,1)), D_n,A)$, we say that $\alpha \sim \alpha'$ if $\alpha$ and $\alpha'$ are in the same connected component of $B(D(\partial M \times (0,1)), D_n,A)$. If $\alpha \sim \alpha'$ and $\alpha$ and $\alpha'$ both have $k+1$ outer walls, then $\alpha$ and $\alpha'$ are in the same component of $B_k(D(\partial M \times (0,1)), D_n,A)$ and hence $t_{\alpha*} =t_{\alpha'*}$. Note that $t_{\alpha}$ is homotopic to $t_{a^k}$ for some $a$ if $\alpha$ has $k+1$ outer walls.

\begin{figure}
\begin{center}
\scalebox{.5}{\includegraphics{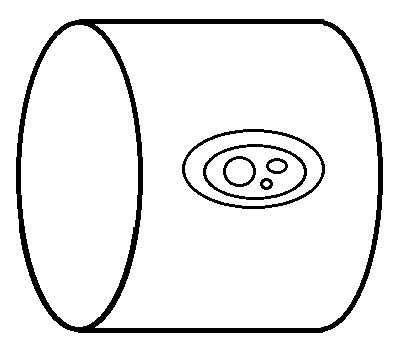}}
\caption{Two outer walls}
\label{Walls}
\end{center}
\end{figure}

We shall assume for the purposes of induction that if $\alpha \sim \alpha'$ and $\alpha$ and $\alpha'$ both have $N+1$ outer walls, then $t_{\alpha*}$ and $t_{\alpha'*}$ are homologous. We shall now prove that if $\alpha \sim \alpha'$ and $\alpha$ and $\alpha'$ both have $N$ outer walls, then $t_{\alpha*}$ and $t_{\alpha'*}$ are homologous. Let $\beta \in B_{k+1}(D(\partial M \times (0,1)), D_n,A)$ be $\alpha$ with an extra outer wall. Note that $d_{i} \beta =\alpha$ for $i \geq k+1 -N$ and $d_{i} \beta$ has $N+1$ outer walls for $i<k+1 -N$ (see Figure \ref{AlphaBetaAlphaPrime} for an illustration). Let $a \in A$ be the product of all of the elements of $A$ labeling the disks comprising $\alpha$. This is well defined in $\pi_0(A)$. Note that $a^k$ has $k+1$ (and hence $N+1$) outer walls (see above for definition of $a^k$). Also observe that $a^k \sim d_i \beta$ for all $i$. By our induction hypothesis, $t_{a^k*}$ is homologous to  $t_{d_i \beta*}$ for $i<k+1 -N$ since $d_i \beta$ has $N+1$ outer walls for $i<k+1 -N$. Let $x \in ker \partial_k$ be arbitrary. Note that the following diagram commutes:

\begin{equation}
\begin{array}{ccccccccl}
B_{k+1}(D( M), D_n,A)  & \overset{t_{\beta}}{\m}  & B_{k+1}(D( M), D_n,A)                 \\
 d_i \downarrow  &    & d_i \downarrow             \\

B_{k}(D( M), D_n,A)  & \overset{t_{d_i \beta}}{\m}  & B_{k}(D( M), D_n,A)

\end{array}
\label{alphabeta}
\end{equation}

 Let $s_l: B_{k}(D(M), D_n,A) \m B_{k+1}(D(M), D_n,A)$ be a degeneracy map. Let $x \in ker \partial_k$ be arbitrary.  From now on, we will write ``$=$'' for homologous and not distinguish between maps and their effects in homology.  Note that for any $z \in H_*(B_{k-1}(D(M), D_n,A) )$, $s_l z$ is null homologous. We have: $$0=\partial_{k+1} t_{\beta} s_l x \text{ (since boundaries are zero in homology)} $$ $$= \sum_{i=0}^{i=k+1} \pm d_i t_{\beta} s_l x \text{ (by the definition of } \partial)$$ $$=\sum_{i=0}^{i=k-N} \pm t_{a^k} d_i s_l x+  \sum_{i=k+1-N}^{i=k+1} \pm t_{\alpha} d_i s_l x \text{ (by Diagram (\ref{alphabeta})) }$$

$$\text{ and}$$

$$0=\partial_{k+1} t_{a^{k+1}} s_l x \text{ (since boundaries are zero in homology)}$$ $$=t_{a^k} \partial_{k+1} s_l x \text{ (since } t_a \text{ is a chain map)}$$ $$ =\sum_{i=0}^{i=k-N} \pm t_{a^k} d_i s_l x+  \sum_{i=k+1-N}^{i=k+1} \pm t_{a^k} d_i s_l x \text{ (definition of } \partial).$$ Subtracting, we see that $t_{\alpha}(y) = t_{a^k}(y)$ for $y= \sum_{i=k+1-N}^{i=k+1} \pm d_i s_l x$. Let $l =k-N$. We have:  $$ y =\pm d_{k+1-N} s_{k-N} x + \sum_{i=k+2-N}^{i=k+1} \pm d_i s_{k-N} x$$ $$=\pm x + \sum_{i=k+2-N}^{i=k+1} \pm  s_{k-N} d_{i-1} x \text{ (simplicial identities)}$$ $$= \pm x  \text{ (image of degeneracies are null homologous.) }$$ Thus $x= \pm y$ so $t_{\alpha}(x) =t_{a^k}(x)$. Since $x$ was arbitrary, we can conclude that $t_{\alpha}$ and $t_{a^k}$ are homologous. If $\alpha' \sim \alpha$, then $\alpha' \sim a^k$ and so $t_{\alpha}$ and $t_{\alpha'}$ are also homologous. The claim now follows by induction.

\end{proof}

\begin{theorem}
The scanning map $s:B(D(M),D_n,A) \m Map^c(M,B^n A)$ induces a map of simplicial spaces $s_* : C_* \m M_*$ which induces a homology equivalence on geometric realizations.
\label{theorem:main2}
\end{theorem}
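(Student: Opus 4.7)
The strategy is to compare $H_*(|C_*|)$ and $H_*(|M_*|)$ via the Segal spectral sequence, with Lemma \ref{lemma:stabequiv} providing the crucial identification of $E_2$-pages. First I would verify that $C_*$ and $M_*$ are proper simplicial spaces. The simplicial spaces $B_*(D(M), D_n, A)$ and $B_*(Map^c(M, \Sigma^n \cdot), D_n, A)$ are proper by Proposition \ref{theorem:admissPropper} combined with Lemma \ref{lemma:MisAdmissable}. A direct application of the simplicial identities to $a_j^k = s_{k-1} \cdots s_0 a_j^0$ yields $d_i t_{a_j^k} = t_{a_j^{k-1}} d_i$, so the stabilization maps commute with face maps. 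Since the stabilizations are cofibrations, $C_*$ and $M_*$ inherit the structure of proper simplicial spaces, and scanning induces a well-defined simplicial map $s_*: C_* \m M_*$.

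I would then apply the Segal spectral sequence to both $|C_*|$ and $|M_*|$. Since filtered colimits of abelian groups are exact and commute with homology of chain complexes, one has
$$E_2^{p,q}(C_*) = \mathrm{colim}_j\, E_2^{p,q}(B_*(D(M), D_n, A)),$$
where the transition maps are induced by $(t_{a_j^\bullet})_*$ acting on the original Segal $E_2$-page, and similarly for $M_*$. Since scanning commutes with stabilization, $s_*$ induces a map between these colimits.

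The crux is to show this map is an isomorphism for every $(p,q)$. Lemma \ref{lemma:stabequiv} asserts that on $E_2^{p,q}(B_*(D(M), D_n, A))$, the action of $(t_\alpha)_*$ for any $\alpha \in B_q(D(\partial M \times (0,1)), D_n, A)$ coincides with that of $(t_{a^q})_*$ for some $a \in \pi_0(A)$. Consequently the displayed colimit equals the localization of the original $E_2$-page at the action of $\pi_0(D_n^q A)$ given by single-disk stabilizations. Now invoke Theorem \ref{theorem:limitConfig} at each level $q$ with $X = D_n^q A$: this yields a homology isomorphism $H_p(B_q(D(M), D_n, A))[\pi_0(D_n^q A)^{-1}] \cong H_p(B_q(Map^c(M, \Sigma^n \cdot), D_n, A))$ induced by scanning, the right-hand side needing no localization because the $T_{a^q}$-stabilizations are homotopy equivalences. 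These levelwise isomorphisms are compatible with face maps, since scanning is a simplicial map, and so assemble into an iso $s_*: E_2^{p,q}(C_*) \to E_2^{p,q}(M_*)$. The theorem then follows from the comparison theorem for convergent spectral sequences.

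The principal obstacle is the identification of the colimit describing $E_2^{p,q}(C_*)$ with the full $\pi_0(D_n^q A)$-localization. Since $(t_\alpha)_*$ fails to commute strictly with the face maps for arbitrary $\alpha$, the $\pi_0(D_n^q A)$-action only descends to the quotient $\ker \partial_q / \mathrm{im}\,\partial_{q+1}$, and Lemma \ref{lemma:stabequiv} is needed precisely to make this well-defined and compatible with the $\pi_0(A)$-colimit coming from the simplicial structure.
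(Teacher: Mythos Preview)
Your proposal is correct and follows essentially the same route as the paper: establish properness of $C_*$ and $M_*$, run the Segal spectral sequence, use Lemma~\ref{lemma:stabequiv} together with Theorem~\ref{theorem:limitConfig} to identify the $E_2$-pages, and conclude by spectral sequence comparison. One small correction: properness of $C_*$ and $M_*$ does not follow from the stabilization maps $t_{a_j}$ being cofibrations (they need not be); rather, the paper argues that since $B_*(D(M),D_n,A)$ and $B_*(Map^c(M,\Sigma^n\cdot),D_n,A)$ are proper, and since a commuting square with vertical cofibrations yields a cofibration between the horizontal mapping cylinders, the mapping telescopes $C_k$ and $M_k$ inherit the required degeneracy cofibrations.
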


\begin{proof}
First note that $C_*$ and $M_*$ are proper simplicial spaces. This follows from the fact that $B_*(D(M),D_n,A)$ and $B_*(Map(M,\Sigma^n \cdot),D_n,A)$ are proper and the following fact about cofibrations. Assume that the following diagram commutes and the vertical maps are cofibrations.  $$
\begin{array}{ccccccccl}
 X  & \m  & Y               \\
 \downarrow  &    & \downarrow             \\

Z  &\m & W

\end{array}$$ Then the inclusion of the mapping cylinder of $X \m Y$ into the mapping cylinder of $Z \m W$ is a cofibration.
 
It is not true that the scanning maps $s:C_k \m M_k$ are homology equivalences. This would follow from Theorem \ref{theorem:limitConfig} if we took the homotopy colimit with respect to stabilization maps $t_{\alpha}$ for $\alpha$ representing each component of $\pi_0(B_k(D(\partial M \times (0,1)),D_n,A))$. However, we are only using stabilization maps of the form $t_{a^k_i}$ (see above for notation). Fortunately this difference is not relevant on the $E_2$ page of the Segal spectral sequence (see Subsection 2.4).  By Lemma \ref{lemma:stabequiv}, the difference between the effects in homology of $t_{\alpha}$ with $\alpha$ arbitrary and $t_{a^k_i}$ is in the image of the alternating sum of the face maps $\sum \pm d_i : H_*(C_{k+1}) \m H_*(C_k)$. Thus by Theorem \ref{theorem:limitConfig}, the map, $s : C_* \m M_*$ induces an isomorphism on the $E_2$ page of the Segal spectral sequence and hence $s : |C_*| \m |M_*|$ is a homology equivalence.
\end{proof}

We can now deduce Theorem \ref{theorem:main} (nonabelian Poincar\e duality after stabilizing) for Andrade's model of topological chiral homology, $\int_M A =B(D(M),D_n,A)$.

\begin{corollary}
If $M$ is an open parallelizable $n$-manifold, the scanning map $s$ induces a homology equivalence between $hocolim_{t_{a_i}} B(D(M),D_n,A)$ and $Map^c(M,B(\Sigma^n,D_n,A))$.
\end{corollary}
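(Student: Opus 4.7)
The plan is to deduce the corollary directly from Theorem \ref{theorem:main2}, which already provides a homology equivalence $s : |C_*| \to |M_*|$, by identifying each geometric realization with the space appearing in the corollary statement.

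First I would show $|C_*| \simeq hocolim_{t_{a_i}} B(D(M), D_n, A)$. By construction, $C_k$ is the sequential homotopy colimit (mapping telescope) of the constant diagram at $B_k(D(M), D_n, A)$ with transition maps $t_{a_j^k}$, and the face and degeneracy maps of $C_*$ are induced levelwise from those of $B_*(D(M), D_n, A)$. Thus $|C_*|$ is a double homotopy colimit: a geometric realization in the simplicial direction and a mapping telescope in the $\mathbb{N}$ direction. Since these two kinds of homotopy colimits commute (a Fubini property for hocolims, since each is the realization of a simplicial object in spaces and the diagonal can be computed in either order), we obtain $|C_*| \simeq hocolim_{\mathbb{N}} |B_*(D(M), D_n, A)| = hocolim_{t_{a_i}} B(D(M), D_n, A)$.

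Next I would show $|M_*| \simeq Map^c(M, B(\Sigma^n, D_n, A))$. Recall from the end of Subsection 2.5 that each stabilization map $T_x$ is a homotopy equivalence of $Map^c(M, \Sigma^n X)$; the analogous maps $T_{a_j^k}$ on $B_k(Map^c(M, \Sigma^n \cdot), D_n, A) = Map^c(M, \Sigma^n D_n^k A)$ are therefore also homotopy equivalences. Hence each mapping telescope $M_k$ is weakly equivalent to $B_k(Map^c(M, \Sigma^n \cdot), D_n, A)$, and since $T_{a_j^k}$ is natural in the simplicial structure these equivalences assemble into a levelwise weak equivalence of proper simplicial spaces. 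Passing to realizations gives $|M_*| \simeq B(Map^c(M, \Sigma^n \cdot), D_n, A)$. Now, exactly as in the connected-algebra argument of Subsection 2.6, Corollary \ref{corollary:mapcommute} applies: $B_*(\Sigma^n, D_n, A)$ is proper by Proposition \ref{theorem:admissPropper} and the admissibility of $\Sigma^n$, and each $\Sigma^n D_n^k A$ is $n$-connected, so the natural map $B(Map^c(M, \Sigma^n \cdot), D_n, A) \to Map^c(M, B(\Sigma^n, D_n, A))$ is a weak equivalence.

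Stringing these identifications together, the scanning map $s$ fits into a diagram in which the outer entries are $hocolim_{t_{a_i}} B(D(M), D_n, A)$ and $Map^c(M, B(\Sigma^n, D_n, A))$, the inner entries are $|C_*|$ and $|M_*|$, and the vertical maps are weak equivalences by the two previous paragraphs; the inner horizontal map is a homology equivalence by Theorem \ref{theorem:main2}. Commutativity of this diagram forces the outer scanning map to be a homology equivalence. The main obstacle is purely one of bookkeeping: verifying that the identifications of $|C_*|$ and $|M_*|$ with the target spaces are natural with respect to the scanning map, which amounts to checking that the scanning natural transformation $D(M) \to Map^c(M, \Sigma^n \cdot)$ intertwines $t_{a_i^k}$ with $T_{a_i^k}$ and commutes with the bisimplicial swap of Step 1. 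Both facts are built into the definitions of the stabilization maps and of $s$, so no genuine new input is required beyond Theorem \ref{theorem:main2}.
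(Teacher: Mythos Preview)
Your proposal is correct and follows essentially the same strategy as the paper: invoke Theorem \ref{theorem:main2}, identify $|C_*|$ via interchanging the telescope and geometric realization, and identify $|M_*|$ using Corollary \ref{corollary:mapcommute} together with the fact that the $T$-stabilization maps are homotopy equivalences. The only cosmetic difference is the order in which you handle $|M_*|$: you collapse the telescope levelwise (using that each $T_{a_j^k}$ is an equivalence on $Map^c(M,\Sigma^n D_n^k A)$) and then apply Corollary \ref{corollary:mapcommute}, whereas the paper first interchanges colimits and applies Corollary \ref{corollary:mapcommute} to obtain $|M_*|\simeq hocolim_{T_{a_i}} Map^c(M,B(\Sigma^n,D_n,A))$, and only then collapses the telescope.
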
 

\begin{proof}
By Theorem \ref{theorem:main2}, $s:|C_*| \m |M_*|$ is a homology equivalence. After interchanging colimits and using Corollary \ref{corollary:mapcommute}, we have that $|C_*| = hocolim_{t_{a_i}} B(D(M),D_n,A)$ and $|M_*|=hocolim_{T_{a_i}} Map(M,B(\Sigma^n,D_n,A)$. Since the stabilization maps $T_{a_i}: Map^c(M,B(\Sigma^n,D_n,A)) \m Map^c(M,B(\Sigma^n,D_n,A))$ are homotopy equivalences, $Map^c(M,B(\Sigma^n,D_n,A))$ is weakly equivalent to $|M_*|$. Thus $hocolim_{t_{a_i}} B(D(M), D_n, A)$ is homology equivalent to $ Map^c(M,B(\Sigma^n,D_n,A))$ and we have proven Theorem \ref{theorem:main} for Andrade's model of topological chiral homology.
\end{proof}

\begin{figure}
\begin{center}
\scalebox{.25}{\includegraphics{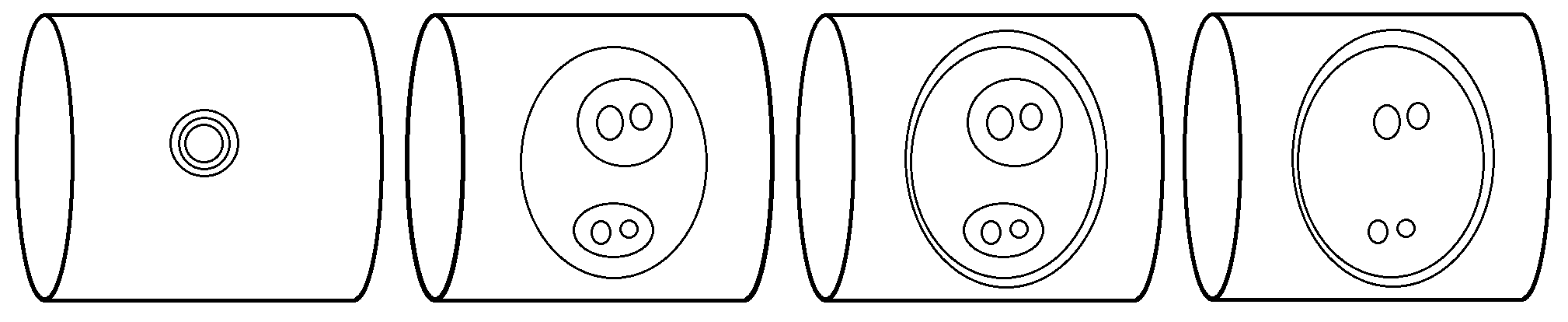}}
\caption{Example of $a^k$, $\alpha$, $\beta$, and $d_i \beta$ with $i<k+1 -N$}
\label{AlphaBetaAlphaPrime}
\end{center}
\end{figure}

\FloatBarrier
\section{Configuration spaces of particles with summable labels}

The goal of this section is to prove Theorem \ref{theorem:main} (nonabelian Poincar\e duality after stabilizing) for Salvatore's model of topological chiral homology, configuration spaces of particles with summable labels \cite{Sa}. We will no longer restrict our attention to the case of \p manifolds. Thus we will consider framed $E_n$-algebras instead of unframed. 

Unlike other models of topological chiral homology, Salvatore's model does not take as inputs an algebra over the little $n$-disks operad, but instead accepts algebras over the (framed) Fulton-MacPherson operad $F_n$. In the previous section, the arguments can be summarized as using May's two-sided bar construction to leverage classical results about configuration spaces to draw conclusions about topological chiral homology. In contrast, Salvatore's model of topological chiral homology is sufficiently close to classical configuration spaces that many of those arguments developed for classical configuration spaces directly apply.  In Subsection 3.1, we recall the definition of the (framed) Fulton-MacPherson configuration spaces and operad as well as review the definition of configuration spaces of particles with summable labels. In 3.2, we recall Salvatore's definition of relative configuration spaces. In 3.3, we extend Salvatore's nonabelian Poincar\e duality theorem from \cite{Sa} to more general relative configuration spaces using arguments from \cite{B}. In 3.4, we use ideas similar to those used by McDuff in \cite{Mc1} regarding homology fibrations and configuration spaces to prove non-abelian Poincar\e duality after stabilizing for Salvatore's model of topological chiral homology.

\FloatBarrier
\subsection{Fulton-Macpherson operad and configuration spaces}

In this subsection, we will recall the definition of the Fulton-MacPherson configuration space and the Fulton-MacPherson operad. Using these definitions, we will describe Salvatore's model of topological chiral homology. We follow the treatment in \cite{Sa} in general but modify some notation in order not to conflict with notation from the previous section. The Fulton-MacPherson configuration space is a partial compactification of the configuration space of ordered distinct points in a manifold.

\begin{definition}
Let $Bl_{\Delta} M^k$ denote the real oriented blow-up of $M^k$ along the small diagonal $\Delta$. 
\end{definition}

There is a natural map from $C(M)(k)\m Bl_{\Delta}(M^k)$. Given a subset $S \subset \{1, \ldots k\}$, there is a natural map $C(M)(k) \m C(M)({|S|})$. Let $j : C(M)(k) \m \prod_{|S|>1} Bl_{\Delta} M^S$ be the product of these maps.

\begin{definition}
Let $C^{fm}(M)(k)$ be the closure of the image of $j$.
\end{definition}

There is an obvious macroscopic location map $b: C^{fm}(M)(k) \m M^k$.

\begin{definition}
As a space, the $k$'th space of operad $F_n$ is the collection of points of $C^{fm}(\R^n)(k)$ macroscopically located at the origin, $b^{-1}(0,0, \ldots)$.
\end{definition}

See \cite{GJ} or \cite{Sa} for an operad structure on $F_n$ and a proof that $F_n$ is an $E_n$-operad. To define the framed \fm operad, we first recall the definition of the semi-direct product of a group $G$ and an operad $\mathcal{O}$ in $G$-spaces (an operad in spaces where each space has a $G$-action and all structure maps are $G$-equivariant).

\begin{definition}
Let $G$ be a group and let $\mathcal{O}$ be an operad in $G$-spaces. Define $\mathcal O \rtimes G$ to be the topological operad with $(\mathcal O \rtimes G)(k)=\mathcal O(k) \times G^k$ and composition map $\tilde m: (\mathcal O \rtimes G) \otimes ( \mathcal O \rtimes G) \m \mathcal O \rtimes G$ given by: $$\tilde m((o,g_1, \ldots g_k);(o_1,g^1_1, \ldots g^{m_1}_1), \ldots,(o_k,g^1_k, \ldots g^{m_k}_k)) =$$ $$( m(o;g_1 o_1, \ldots, g_k o_k),g_1g_1^1,\ldots, g_kg_k^{m_k})$$ with $m:\mathcal O \otimes \mathcal O \m \mathcal O$ the composition map of the operad $\mathcal O$. 
\end{definition}

We now define the framed \fm operad and the framed \fm configuration space.

\begin{definition}
Define the framed \fm operad $fF_n$ as $F_n \rtimes GL_n(\R)$ with the $GL_n(\R)$ action on $F_n(k)$ induced by the action of $GL_n(\R)$ on $\R^n$. 
\end{definition}

See \cite{Sa} for more details on this action.


\begin{definition}
Let $P \m M$ be the principle $GL_n(\R)$ bundle associated to the tangent bundle of an $n$-manifold $M$. Let the framed \fm configuration space of $k$ points in $M$ be the pullback of the following diagram:$$
\begin{array}{ccccccccl}
fC^{fm}(M)(k) &\m & P^k\\
 \downarrow   & & \downarrow       \\
C^{fm}(M)(k) & \overset{b}{\m} & M^k\\

\end{array}$$ with $b$ the macroscopic location map. Let $fC^{fm}(M) = \bigsqcup_k fC^{fm}(M)(k)$. Note that the symmetric group actions on $M^k$ induces a  $\Sigma$-space structure on $fC^{fm}(M)$.
\end{definition}

In \cite{Sa}, Salvatore describes a right $fF_n$-module structure on $fC^{fm}(M)$. This structure is similar to the $D_n$-module structure on $D(M)$. For $M$ an $n$-manifold with corners, we define $fC^{fm}(M)$ as follows. Let $W$ be an $n$-manifold (without boundary or corners) containing $M$. Let $fC^{fm}(M)$ be the subspace of $fC^{fm}(W)$ of particles macroscopically located in $M$. We can now give Salvatore's definition of the configuration space of particles with summable labels.

\begin{definition}
For an $fF_n$-algebra $A$, $M$ an  $n$-manifold (possibly with corners), let $C(M;A)$ be the coequalizer of the two natural maps $fC^{fm}(M) \otimes fF_n \otimes A \rightrightarrows fC^{fm}(M) \otimes A$.
\end{definition}

\FloatBarrier
\subsection{Relative configuration spaces of particles}

There also is a relative version of Salvatore's configuration spaces of particles with summable labels. For $N \subset M$, we will define a relative configuration space $C^{fm}(M,N;A)$. Intuitively it is the space of particles in $M$ which vanish if they enter $N$. To define $C(M;A)$, we used a right $fF_n$-module $fC^{fm}(M)$. To define $C^{fm}(M,N;A)$, we will need to define a right functor $fC(M,N)$ over the monad $fF_n$. Throughout, we will assume that $M$ and $N$ are manifolds with corners, $N \hookrightarrow M$ is a cofibration, $dim M=n$ and that $M - N$ is open. 

Let $W$ be an open submanifold containing $N$. Let $X$ be a space. All elements of $fC^{fm}(M)X$ can be uniquely described by an element of $fC^{fm}(M-N)X$ and an element of $fC^{fm}(W)X$ consisting of points macroscopically located in $N$. Let $\sim$ be the relation that on $fC^{fm}(M)X$ given by identifying elements whose corresponding elements in $fC^{fm}(M-N)X$ are equal. 

\begin{definition}
For a based space $X$, we define a space $fC^{fm}(M,N)X$ to be $C^{fm}(M)X / \sim$. Let $fC^{fm}(M,N)$ be the functor from based spaces to based spaces which sends a space $X$ to $fC^{fm}(M,N)X$.
\end{definition}

See \cite{Sa} for a description of the right $fF_n$-functor structure on $C^{fm}(M,N)$.

\begin{definition}
For $A$ an $fF_n$-algebra, let $C(M,N;A)$ denote the coequalizer of the two natural maps $fC^{fm}(M,N)  fF_n A \rightrightarrows  fC^{fm}(M,N) A$.
\end{definition}
\FloatBarrier
\subsection{Quasifibrations and scanning theorems}

In this subsection we will recall Salvatore's definition of the scanning map, review the nonabelian Poincar\e duality theorems from \cite{Sa}, as well as review Salvatore's results concerning when the natural map $\pi: C(M;A) \m C(M,N;A)$ is a quasi-fibration. All theorems without proof in this subsection are due to Salvatore in \cite{Sa}. Before we can define Salvatore's scanning map, we need the following theorem regarding a model of $B^nA$ and the following definition of relative compactly supported space of sections.

\begin{theorem} For an $fF_n$-algebra $A$, $C(S^n, pt; A) \simeq B^nA$ and $C(\R^n;A) \simeq A$.
\end{theorem}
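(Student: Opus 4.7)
The plan is to establish the two equivalences separately, starting with $C(\R^n; A) \simeq A$.

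For $C(\R^n; A) \simeq A$, the strategy is an explicit strong deformation retract onto the subspace of single-particle configurations at the origin, which is canonically identified with $A$. Let $\iota : A \hookrightarrow C(\R^n; A)$ send $a$ to the unary configuration placed at $0$ with identity framing. Define a scaling homotopy $h_t : C(\R^n; A) \to C(\R^n; A)$, for $t \in [0,1]$, by rescaling macroscopic positions, $h_t(\xi) = t \cdot \xi$. This is well defined because rescaling commutes with the right $fF_n$-action and so descends through the coequalizer $fC^{fm}(\R^n) \otimes fF_n \otimes A \rightrightarrows fC^{fm}(\R^n) \otimes A$. At $t = 1$ it is the identity; as $t \to 0$ a configuration degenerates in the Fulton-MacPherson compactification to one macroscopically located at the origin, so that its limit lies in the fiber $b^{-1}(0,\ldots,0)$, which is a copy of $fF_n(k)$. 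The coequalizer relation then absorbs this microscopic data into a single element of $A$ via the $fF_n$-algebra structure, so $h_0$ factors through $\iota$, giving the required retraction.

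For $C(S^n, pt; A) \simeq B^n A$, I would proceed by induction on $n$ using the quasi-fibration theorem $\pi : C(M; A) \to C(M, N; A)$ reviewed earlier in this subsection. The base case $n = 1$ is the classical identification of $C(S^1, pt; A)$ with the geometric realization of the bar construction of $A$ viewed as an $A_\infty$-monoid. For the inductive step, realize $S^n$ as the unreduced suspension of $S^{n-1}$ and apply the quasi-fibration to $M = S^{n-1} \times I$ with $N = S^{n-1} \times \{0, 1\}$, where one endpoint copy of $S^{n-1}$ contains the chosen basepoint $pt$. The total space $C(M; A)$ is contractible by a scaling argument onto a central slice, directly analogous to the argument for the first equivalence. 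The fiber is naturally identified with $C(S^{n-1}, pt; A)$, which by induction is $B^{n-1} A$. The quasi-fibration therefore exhibits $C(S^n, pt; A)$ as $B(B^{n-1} A) = B^n A$.

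The main obstacle will be the inductive step: verifying that Salvatore's quasi-fibration hypotheses are in force on the chosen pair $(M, N)$, that the total space is genuinely contractible, and that the fiber is naturally equivalent to $C(S^{n-1}, pt; A)$ in a way compatible with the iterated delooping. The cleanest way to manage this is to ensure the scaling/cone argument used for contractibility of the total space is functorial with respect to the comparison map to $B^n A$, so that the inductive identification propagates correctly through the long exact sequence of the quasi-fibration.
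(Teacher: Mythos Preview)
The paper does not prove this theorem; it is quoted from \cite{Sa} (the subsection opens by stating that all theorems without proof there are due to Salvatore), so there is no in-paper argument to compare against.

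Your argument for $C(\R^n;A)\simeq A$ is correct and standard: the Fulton--MacPherson compactification is built precisely so that radial scaling extends continuously to $t=0$, and the coequalizer relation then absorbs the resulting infinitesimal datum into a single element of $A$ via the $fF_n$-action.

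The inductive argument for $C(S^n,pt;A)\simeq B^nA$, however, has two genuine gaps. First, $C(S^{n-1}\times I;A)$ is not contractible, and scaling onto a central slice does not make it so: contracting the $I$-coordinate brings together only those particles sharing the same $S^{n-1}$-coordinate, while particles over distinct points of $S^{n-1}$ remain macroscopically separated in the limit. You land in configurations supported on the slice $S^{n-1}\times\{1/2\}$, not a single point, so the analogy with the $\R^n$ argument fails. Second, and more fundamentally, the projection $C(M;A)\to C(M,N;A)$ is \emph{not} a quasi-fibration for non-grouplike $A$. Lemma~\ref{lemma:qfib} concerns maps $C(M,N;A)\to C(M,K\cup N;A)$ under the hypothesis that $\pi_0(N\cap K)\to\pi_0(M)$ is onto; starting from $N=\varnothing$ this hypothesis is vacuously violated. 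Indeed, the failure of $C(M;A)\to C(M,N;A)$ to be a quasi-fibration in the non-grouplike case is exactly the obstruction the remainder of Section~3 is designed to circumvent (by stabilizing and replacing quasi-fibrations with homology fibrations). A workable inductive argument must carry a nonempty relative part throughout so that Lemma~\ref{lemma:qfib} applies at each stage, and the contractible intermediary should itself be a \emph{relative} configuration space in which particles can legitimately be pushed off into the boundary.
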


For a manifold $M$ with a metric, let $TM$, $DM$, and $SM$ respectively denote the tangent bundle, closed unit disk bundle and sphere bundle of $M$. Given a $fF_n$-algebra $A$, let $E^A \m M$ be the bundle whose fiber over a point $m \in M$ is $C(DM_m,SM_m;A)$. We will define a scanning map from $C(M;A)$ to a space of sections of $E^A$. Note that fibers of $E^A$ are all models of $B^nA$.

\begin{definition}
Let $W$ be an open $n$-manifold containing $M$ containing $N$ and let $E \m W$ be a bundle with a preferred section $s_0$. Let $\Gamma^c_{(M,N)}(E)$ denote the space of sections of $E$ over $W-N$ which agree with $s_0$ on $W-M$. For $N$ empty, let $\Gamma^c_{M}(E) =\Gamma^c_{(M,N)}(E)$.
\label{definition:compactsupport}
\end{definition}

The scanning map will depend on a choice of metric on $M-N$ and function $\epsilon:M \m \R_{>0}$. Require that the ball of radius $\epsilon(m)$ around $m \in M-N$ is inside the injectivity radius. For $m \in M-N$, the map $T_mM \m M$ given by $v \m exp_m(\epsilon(m) v)$ gives a diffeomorphism between $D_mM$ and $B_{\epsilon(m)}(m)$, the ball of radius $\epsilon(m)$ around the point $m$. Smooth maps of spaces induce maps of labeled configuration spaces with the maps acting on macroscopic locations of the particles and the derivatives of the maps acting on the labels. In this way, the inverse of the above function induces a map of configuration spaces $e_m:C(M,M-B_{\epsilon (m)}(m);A) \m C(D_mM,S_mM;A)$. Using this map, we can construct the scanning map.

\begin{definition}
Let $s : C(M,N;A) \m \Gamma^c_{(M,N)}(E^A)$ be the map defined as follows. For $m \in M-N$ and $\xi \in C(M,N;A)$, let $s(\xi)(m) \in E^A_m$ be the image of $\xi$ under the maps: $$C(M,N;A) \overset{\pi}{\m} C(M,M-B_{\epsilon (m)}(m);A) \overset{e_m}{\m} C(D_mM,S_mM;A)=E^A_m.$$ Here the preferred section of $E^A$ used in the compactly supported condition is the empty section $s_0$. That is, $s_0(m) \in E_m^A = C(D_mM,S_mM;A)$ is the empty configuration. 
\end{definition}

The following lemma is an important tool in Salvatore's proof of nonabelian Poincar\e duality.

\begin{lemma}
Let $K$ be $n$-submanifold of $M$ and assume that $\pi_0(N \cap K) \m \pi_0(M)$ is onto. Then the projection map $\pi : C(M,N;A) \m C(M,K \cup N;A)$ is a quasi-fibration with fiber $C(K,K \cap N;A)$.
\label{lemma:qfib}
\end{lemma}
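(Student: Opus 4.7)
The plan is to apply the classical quasi-fibration criterion of Dold--Thom, in the form used by McDuff in \cite{Mc1} for ordinary configuration spaces and then adapted by Salvatore in \cite{Sa} to the summable-label setting. First I would unpack the fiber description. A configuration $\eta \in C(M, K\cup N;A)$ is represented by labeled particles in $M$, modulo vanishing on $K \cup N$ and modulo the $fF_n$-action. Its preimage under $\pi$ consists of such configurations viewed instead as elements of $C(M,N;A)$, augmented by particles in $K$ that are only required to vanish on $K \cap N$. Since the particles in $K \setminus N$ and those in $M \setminus (K \cup N)$ are macroscopically disjoint, and interact only through the operadic structure already accounted for in both quotients, the strict fiber over $\eta$ is naturally identified with $C(K, K \cap N; A)$, independent of $\eta$.

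Next I would filter the base $B := C(M, K\cup N;A)$ by closed subspaces $B_n$ of configurations whose macroscopic support in $M \setminus (K\cup N)$ has cardinality at most $n$, and verify that $\pi$ restricts to a Hurewicz fibration over each open stratum $B_n \setminus B_{n-1}$. Local trivializations are standard: around each $\eta$ in the open stratum, choose pairwise disjoint open disk neighborhoods of its macroscopic support points away from $K \cup N$; over the resulting chart $U_\eta \subset B$, the preimage $\pi^{-1}(U_\eta)$ splits homeomorphically as $U_\eta \times C(K, K\cap N;A)$.

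The heart of the argument is producing the deformations required by the quasi-fibration criterion: for each $n$, an open neighborhood $V_n$ of $B_n$ in $B_{n+1}$ together with a deformation $V_n \to B_n$ which lifts to $E := C(M,N;A)$ and is a fiberwise weak equivalence. The hypothesis that $\pi_0(N \cap K) \to \pi_0(M)$ is onto enters exactly here: it guarantees that in every component of $M$ one can choose, continuously in families, a path terminating in $N \cap K$, and one uses such a family to push the ``critical'' $(n{+}1)$st macroscopic particle of configurations in $V_n$ along a path into $N \cap K \subset N$, where that particle is identified with the basepoint and the filtration drops. Carrying the fiber particles in $K$ along the deformation is harmless because the motion of the critical particle is arranged to take place in one of the disjoint open disks above. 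The induced map on fibers is essentially the identity on the $C(K, K\cap N;A)$ factor, which is the desired weak equivalence.

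The main obstacle is handling the configurations in $V_n$ where macroscopic collisions with the boundary of the filtration occur, so that the naive trivialization by disjoint disks breaks down. This is handled exactly as in Salvatore's argument for the case $N = \emptyset$ in \cite{Sa}: one exploits the operadic structure on the \fm compactification to define continuous local sections even near collisions, and combines this with the cofibration hypothesis $N \hookrightarrow M$ and the properness of the filtration to guarantee that summability of labels is preserved throughout the deformation. Putting these pieces together shows that $\pi$ satisfies the McDuff--Segal criterion, and hence is a quasi-fibration with the claimed constant fiber $C(K, K \cap N; A)$.
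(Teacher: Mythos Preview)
The paper does not supply its own proof of this lemma: it is quoted as one of Salvatore's results from \cite{Sa}, under the blanket attribution ``All theorems without proof in this subsection are due to Salvatore in \cite{Sa}.'' Your outline via the Dold--Thom/McDuff filtration criterion is exactly the method Salvatore employs, so in that sense you have reconstructed the intended argument.

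One point deserves tightening. You write that the motion of the critical particle ``is arranged to take place in one of the disjoint open disks above,'' and hence the induced map on fibers is ``essentially the identity.'' But the path from $M\setminus(K\cup N)$ into $N\cap K$ must eventually enter $K$, so it cannot stay in a disk disjoint from the fiber support. What actually happens is that the lift of the deformation to $C(M,N;A)$ carries the critical particle into $K$, where it becomes an extra fiber particle; the induced map on each fiber is therefore a stabilization map $t_a:C(K,K\cap N;A)\to C(K,K\cap N;A)$, not literally the identity. The hypothesis $\pi_0(N\cap K)\to\pi_0(M)$ onto is what makes this stabilization a weak equivalence: in the relevant component of $K$ one can afterwards slide the newly added particle into $K\cap N$, where it vanishes, exhibiting a homotopy from $t_a$ to the identity. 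This is the precise mechanism by which condition (v)(c) of the criterion is verified, and it is worth stating explicitly rather than folding it into ``essentially the identity.''
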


\begin{theorem}
If $\pi_0(N) \m \pi_0(M)$ is onto, then $s : C(M,N;A) \m \Gamma^c_{(M,N)}(E^A)$ is a weak homotopy equivalence.
\label{theorem:relscan}
\end{theorem}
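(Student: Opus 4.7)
The plan is to follow the classical strategy of Segal, McDuff, B\"odigheimer, and Salvatore: induct on a handle decomposition of $M$ relative to $N$, comparing quasi-fibrations of configuration spaces with quasi-fibrations of section spaces via the five lemma. First I would reduce to the case of $M$ obtained from $N$ by attaching finitely many handles. An exhaustion $M = \bigcup_i M_i$ with each $M_i$ a finite handlebody over $N$ lets us pass to colimits, since both $C(-,N;A)$ (by construction) and $\Gamma^c_{(-,N)}(E^A)$ (because compactly supported sections extend uniquely from compact pieces) are compatible with these filtered colimits.

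The induction is on the number of handles. The base case $M=N$ is trivial: both spaces are contractible. For the inductive step, write $M = M_0 \cup K$ where $K$ is a handle glued to $M_0 \supset N$ along a piece of $\partial K$, chosen so that $\pi_0(K\cap N) \m \pi_0(M)$ remains surjective --- this is where the running hypothesis $\pi_0(N)\m\pi_0(M)$ surjective plays the essential role, by allowing us to route successive handle attachments through $N$. Lemma \ref{lemma:qfib} then supplies a quasi-fibration
$$ C(K,K\cap N;A) \m C(M,N;A) \m C(M,K\cup N;A), $$
which maps, via the scanning natural transformation, to the analogous restriction fibration on section spaces with fiber $\Gamma^c_{(K,K\cap N)}(E^A)$. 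By the inductive hypothesis applied to $(M,K\cup N)$, the scanning map on the base is a weak equivalence; by the atomic case treated below, the scanning map on the fiber is a weak equivalence; and the five lemma on the long exact sequences of homotopy groups finishes the step.

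The main obstacle is the ``atomic'' case of a model handle: one must verify that $s : C(K, K\cap N; A) \m \Gamma^c_{(K,K\cap N)}(E^A)$ is a weak equivalence when $K$ is diffeomorphic to a closed $n$-disk and $K\cap N$ is a nonempty union of components of pieces of $\partial K$. The extremal case $K\cap N = \partial K$ identifies both sides with $B^n A$ via the identification $C(S^n,\mathrm{pt};A) \simeq B^n A$ quoted earlier (the disk side by the definition of $E^A$, the configuration side by collapsing $\partial K$). The intermediate cases can in turn be reduced to the extremal one by further applications of Lemma \ref{lemma:qfib} that collapse the complementary boundary piece. Verifying that these comparisons commute up to homotopy with the scanning map defined using $\exp_m$ and $e_m$ requires a careful and explicit geometric check --- this is the technical heart of Salvatore's argument, and is where the specific features of the Fulton--MacPherson model (in particular its compatibility with smooth boundary restriction) are used.
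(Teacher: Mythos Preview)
Your proposal is correct and follows essentially the same route as the paper. The paper's own proof is little more than a citation: it notes that Salvatore proved the case $N=\partial M$ by adapting Steps 1--4 of B\"odigheimer's Proposition~2, and that Steps 5--8 of that same proposition (handle induction via the quasi-fibration of Lemma~\ref{lemma:qfib}, compared to the restriction fibration on sections by the five lemma) extend the result to general $N$ with $\pi_0(N)\to\pi_0(M)$ onto. Your sketch is exactly this argument, spelled out in somewhat more detail than the paper bothers to give.

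One small remark: your phrase ``route successive handle attachments through $N$'' is a little loose. In the standard B\"odigheimer argument the case $N=\partial M$ is done first (where each handle $K=D^i\times D^{n-i}$ automatically meets $N=\partial M$ in its free face $D^i\times S^{n-i-1}$, so the hypothesis of Lemma~\ref{lemma:qfib} is automatic), and the passage to general $N$ is a separate collar/exhaustion step rather than part of the same handle induction. Your unified induction can be made to work, but the justification that each handle can be chosen with $K\cap N\neq\varnothing$ deserves a sentence of its own; as written it is asserted rather than argued.
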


\begin{proof}
The case when $N = \partial M$ is proven in \cite{Sa}. Salvatore's proof is identical to Steps 1-4 of Proposition 2 of \cite{B} after replacing labeled (without summing) configuration spaces with configuration spaces with summable labels. Steps 5-8 of Proposition 2 of \cite{B} also apply to configuration spaces with summable labels to give the above theorem. The key tool in both Salvtore and B{\"o}digheimer's proof is respectively Lemma \ref{lemma:qfib} and the analogous statement for labeled configuration spaces without summable labels.
\end{proof}

\FloatBarrier
\subsection{Homology fibrations and scanning theorems}

In this subsection, we will consider the case that $M$ is a connected open $n$-manifold which is the interior of a manifold with non-empty boundary. We will describe a stabilization procedure involving bringing points in from infinity and prove that the scanning map induces a homolology equivalence between the stabilization of $C(M;A)$ and $\Gamma_M^c(E^A)$.

In Andrade's model of topological chiral homology, the only point-set topological assumptions we made were that the $D_n$-algebras had non-degenerate basepoints. To prove non-abelian Poincar\e duality after stabilizing for configuration spaces of particles with summable labels, we need to make additional point-set topological assumptions.

\begin{definition}
A space $X$ is uniformly locally connected if there is a neighborhood $\mathcal U$ of the diagonal in $X \times X$ and a map $\lambda : \mathcal U \times [0,1] \m X$ such that $\lambda(x,y,0)=x$, $\lambda(x,y,1)=y$ and $\lambda(x,x,t)=x$.
\end{definition}

\begin{definition}
We call an $fF_n$-algebra $A$ and a smooth $n$-manifold $M$ non-pathological if $A$ is uniformly locally connected and well based and both $A$ and $C(M;A)$ are homotopy equivalent to $CW$ complexes.
\end{definition}

We will prove non-abelian Poincar\e duality after stabilizing under the assumption that the pair $(A,M)$ is non-pathological. Next we will describe the stabilization maps. For simplicity, we assume that $M$ is connected. Since we assume that $M$ is the interior of a manifold with non-empty boundary, we can find a submanifold with boundary $Q_1$, diffeomorphic to $[0,1) \times \R^{n-1}$ such that $M-Q_1$ is diffeomorphic to $M$ by a diffeomorphism isotopic to the identity. For future use, we will fix another submanifold with boundary $Q_2$ containing $Q_1$ in its interior (see Figure \ref{H}) with the same properties as $Q_1$. Let $M_i=M-Q_i$. Fix a diffeomorphism $f:M_1 \m M$ such that $f|_{M_2}=id$ and which is isotopic relative to $M_2$ to the standard inclusion $i: M_1 \m M $. Also fix a point $q \in Q_1$.

\begin{figure}
\begin{center}
\scalebox{.6}{\includegraphics{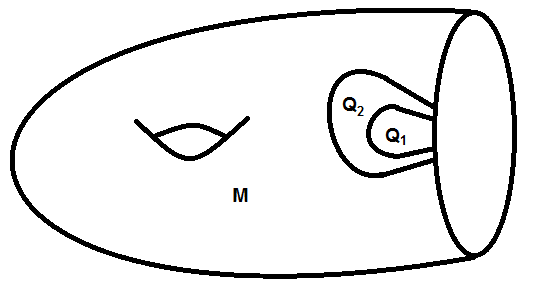}}
\caption{The subsets $Q_1$ and $Q_2$}
\label{H}
\end{center}
\end{figure}

\FloatBarrier

\begin{definition}
For $a \in A$, let $t_a : C(M;A) \m C(M;A)$ be defined as follows. The diffeomorphism $f^{-1}$ induces a map $C(M;A) \m C(M_1;A)$. Mapping a configuration of labeled points in $C(M_1;A)$ to the same configuration of points union $(q;a)$ gives a map $C(M_1;A) \m C(M;A)$. Let $t_a$ be the composition of these two maps.

\end{definition}

Let $T_{a}$ be the corresponding stabilization map for $\Gamma^c_M(E^A)$. The goal of this section is to prove the following theorem. It is nonabalian Poincar\e duality after stabilizing for configuration spaces of particles with summable labels.

\begin{theorem}
Let $M$ be a connected parallelizable $n$-manifold with $n>1$ which is the interior of a manifold with nonempty boundary. Assume that $(A,M)$ is non-pathological. Let $\{a_i\}$ be a sequence of elements of $A$ such that each connected component has an infinite number of terms of the sequence. The scanning map $s$ induces a homology equivalence between $hocolim_{t_{a_i}} C(M;A)$ and $\Gamma^c_M(E^A)$.
\label{theorem:main3}
\end{theorem}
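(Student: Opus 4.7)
The plan is to adapt McDuff's classical homology-fibration argument from \cite{Mc1} to Salvatore's model. The key tool is the commuting square
\begin{equation*}
\begin{array}{ccc}
C(M;A) & \stackrel{s}{\m} & \Gamma^c_M(E^A) \\
\pi \downarrow & & \rho \downarrow \\
C(M,Q_1;A) & \stackrel{s}{\m} & \Gamma^c_{(M,Q_1)}(E^A)
\end{array}
\end{equation*}
where $\pi$ is the forgetful projection that collapses particles in $Q_1$ and $\rho$ is restriction of sections. Because $\pi \circ t_{a_i} = \pi$, the left column descends to the hocolim, and Theorem \ref{theorem:relscan} makes the bottom arrow a weak equivalence (both $Q_1$ and $M$ are connected). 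It thus suffices to show that both columns become homology fibrations after stabilization and that the fibers match under $s$; a comparison of long exact sequences in homology then forces $s$ to be a homology equivalence on total spaces.

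On the section side, $\rho$ is a Serre fibration, and its fiber over the empty section is the space of compactly supported sections of $E^A|_{Q_1}$; since $Q_1 \cong [0,1) \times \R^{n-1}$ is contractible, $E^A$ trivializes there with fiber a model of $B^n A$, and this fiber is $\Omega^n B^n A$. Each $T_{a_i}$ is a homotopy equivalence of $\Gamma^c_M(E^A)$ (it alters a section only near a fixed collar region by a self-equivalence), so $hocolim_{T_{a_i}} \Gamma^c_M(E^A) \simeq \Gamma^c_M(E^A)$, with the fiber of $\rho$ naturally included.

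The main work is to show that $hocolim_{t_{a_i}} C(M;A) \m C(M,Q_1;A)$ is a homology fibration with fiber $\Omega^n B^n A$. The set-theoretic fiber of $\pi$ over the empty configuration is $C(Q_1;A) \simeq A$ (since $Q_1$ is contractible), and the restriction of $t_{a_i}$ to this fiber is multiplication by $a_i$ in the $fF_n$-algebra $A$; by the group completion theorem of \cite{MSe}, the fiberwise hocolim is $\Omega^n B^n A$, matching $\rho$. To upgrade this to a genuine homology-fibration statement, I would apply Lemma \ref{lemma:qfib} (with $N = Q_1$ and an auxiliary submanifold $K$ meeting $Q_1$) to obtain quasi-fibration structures relating different fibers of $\pi$, and then invoke the McDuff--Segal homology-fibration criterion, as in \cite{Mc1}, to show that the inclusion of point-set fibers into homotopy fibers becomes a homology equivalence after stabilization.

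The main obstacle is exactly this McDuff-style verification. Because $A$ is not grouplike, the fibers of $\pi$ over different base configurations cannot be compared directly, and one must slide particles through $M$ to identify them up to homology after stabilization; this is where the non-pathological hypothesis on $(A,M)$ (in particular uniform local connectedness of $A$, and the $CW$-type requirements used both for the McDuff--Segal criterion and for the group-completion theorem) enters. Once the homology-fibration structure is in place, the five-lemma applied to the long exact sequences (equivalently, to the spectral sequences of the two fibrations) yields the desired homology equivalence on total spaces.
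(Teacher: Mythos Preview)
Your overall strategy is exactly the paper's: project to a relative configuration space, compare with the restriction fibration on sections, show the configuration side becomes a homology fibration after stabilizing, identify the fibers via group completion, and finish with a Serre spectral-sequence comparison.

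There is, however, a concrete error in your setup. With the stabilization map as defined (push the configuration into $M_1 = M \setminus Q_1$ via $f^{-1}$, then insert a particle at $q \in Q_1$), the identity $\pi \circ t_{a_i} = \pi$ \emph{fails} for the projection $\pi : C(M;A) \to C(M,Q_1;A)$: the diffeomorphism $f^{-1}$ moves every point of the configuration, so $\pi(t_a(\xi)) = f^{-1}_*(\xi)$ while $\pi(\xi) = \xi|_{M_1}$, and these differ. The paper repairs this by introducing a strictly larger region $Q_2 \supset Q_1$ and demanding $f|_{M \setminus Q_2} = \mathrm{id}$; one then projects to $C(M,Q_2;A)$, where $\pi \circ t_a = \pi$ holds on the nose, and the fiber is $C(Q_2;A)$. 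This is the reason the paper carries both $Q_1$ and $Q_2$.

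Two further points you should make explicit. First, Lemma~\ref{lemma:qfib} is not the right tool for the homology-fibration step: its hypothesis $\pi_0(N\cap K)\to\pi_0(M)$ onto fails here (we have $N=\varnothing$), and indeed $C(M;A)\to C(M,Q_2;A)$ is \emph{not} a quasi-fibration when $A$ is not grouplike. The paper instead verifies McDuff's criterion (Proposition~\ref{theorem:DoldMcDuff}) directly, filtering $C(M,Q_2;A)$ by the $\pi_0(A)$-degree of the part of the configuration lying in $M\setminus Q_2$ and checking condition~(v) by an isotopy that slides a collar of $Q_2$ into $Q_2$; this is where the non-pathological hypotheses are used. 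Second, the spectral-sequence comparison needs the $\pi_1$-action of the base on the homology of the fiber to be trivial on both sides; the paper proves this as a separate lemma using that $C(Q_2;A)$ is homotopy-commutative for $n>1$. Finally, drop the ``long exact sequence'' phrasing: homology fibrations do not produce one in general, and your parenthetical about spectral sequences is the argument that actually works.
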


By $hocolim_{t_{a_i}}C(M;A)$, we mean the mapping telescope of the following diagram: $$C(M;A) \overset{t_{a_{1}}}{\m} C(M;A) \overset{t_{a_{2}}}{\m} C(M;A) \overset{t_{a_{3}}}{\m} C(M;A) \ldots $$ All other similar homotopy colimits are understood to be taken over similar diagrams. 

For $M$ parallelizable,  $\Gamma^c_M(E^A) \simeq Map^c(M,B^n A)$. In particular, Theorem \ref{theorem:main3} implies Theorem \ref{theorem:main} for Salvatore's model of topological chiral homology. The rest of this section is devoted to proving Theorem \ref{theorem:main3}. Consider the following commuting diagram:

$$
\begin{array}{ccccccccl}
C(Q_2;A) & \overset{s}{\m} & \Gamma^c_{Q_2}(E^A) \\
\iota \downarrow   & & \downarrow       \\
C(M;A) & \overset{s}{\m} & \Gamma^c_{M}(E^A) \\
\pi \downarrow   & & \downarrow       \\
C(M,Q_2;A) & \overset{s}{\m} & \Gamma^c_{(M,Q_2)}(E^A). \\

\end{array}$$

Note that the right-hand side is a fiber sequence. This follows from a generalization of the fact that the functor $Map(\cdot,X)$ takes cofiber sequences to fiber sequences (for example see the proof of Proposition 2 of \cite{B}). By Theorem \ref{theorem:relscan}, the bottom row is a weak homotopy equivalence. When $A$ is grouplike, Salvatore proved that the left-hand side is a quasi-fibration \cite{Sa} and the top row is a weak equivalence. Thus, if $A$ is grouplike, $s: C(M;A) \m \Gamma^c_M(E^A)$ is a weak equivalence by the long exact sequence of homotopy groups and the five lemma. In the nongrouplike case, we will use a similar argument. First we will stabilize.  Then we will show that the left hand side is a homology fibration and that the top scanning map is a homology equivalence. Then we will use the spectral sequence comparison theorem. Since we assumed that $f$ is the identity on $M_2$, the following diagram commutes:

$$
\begin{array}{ccccccccl}
C(Q_2;A) & \overset{t_a}{\m} & C(Q_2;A) \\
\iota \downarrow   & & \iota \downarrow       \\
C(M;A) & \overset{t_a}{\m} & C(M;A) \\
\pi \downarrow   & & \pi \downarrow       \\
C(M,Q_2;A) & \overset{id}{\m} & C(M,Q_2;A) .\\

\end{array}$$

Thus, the following diagram commutes:

\begin{equation}
\begin{array}{ccccccccl}
hocolim_{t_{a_i}}C(Q_2;A) & \overset{s}{\m} & hocolim_{T_{a_i}} \Gamma^c_{Q_2}(E^A) \\
\iota \downarrow   & & \downarrow       \\
hocolim_{t_{a_i}}C(M;A) & \overset{s}{\m} & hocolim_{T_{a_i}} \Gamma^c_{M}(E^A)\\
\pi \downarrow   & & \downarrow       \\
C(M,Q_2;A) & \overset{s}{\m} & \Gamma^c_{(M,Q_2)}(E^A). \\
\label{star}
\end{array}
\end{equation}

To see that the top row of Diagram (\ref{star}) is a homology equivalence, we recall the following result from \cite{Sa}.

\begin{theorem}
The scanning map $s: C(\R^n;A) \m \Gamma^c_{\R^n}(E^A)$ is a group completion.
\end{theorem}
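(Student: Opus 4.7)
The plan is to reduce the statement to the classical McDuff--Segal group completion theorem for $E_n$-algebras by identifying both sides explicitly. First I would use the identifications already available in the excerpt: $C(\R^n; A) \simeq A$ from the theorem preceding Definition \ref{definition:compactsupport}, and $\Gamma^c_{\R^n}(E^A) \simeq \Omega^n B^n A$ obtained from the contractibility of $\R^n$ together with the identification of the fiber $E^A_m \cong C(D_m\R^n, S_m\R^n; A) \simeq B^n A$. Under these equivalences I would check that the scanning map $s$ corresponds up to homotopy to the canonical unit map $\eta : A \m \Omega^n B^n A$ of the adjunction between $fF_n$-algebras and $n$-fold loop spaces. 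This requires tracing $s$ through its definition via the fiberwise maps $e_m$ and verifying compatibility with the bar/delooping construction that defines $B^n A$.

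Second, I would invoke the group completion theorem applied to $\eta$. Since $\Omega^n B^n A$ is automatically grouplike as an $n$-fold loop space, the claim that $\eta$ is a group completion reduces to the statement that the map
\[\eta_* : H_*(A)[\pi_0(A)^{-1}] \m H_*(\Omega^n B^n A)\]
is an isomorphism, or equivalently that the telescope $hocolim_{t_{a_i}} A \m \Omega^n B^n A$ is a weak equivalence, where the $t_{a_i}$ are the maps induced by multiplication by representatives of generators of $\pi_0(A)$. The non-pathological hypotheses on $(A, M)$, specifically uniform local connectedness together with the well-based and CW conditions, are precisely what is needed so that this homotopy colimit computes the Pontrjagin localization and so that the McDuff--Segal theorem applies.

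The weak equivalence on the telescope would be verified by a McDuff-style homology fibration argument. Split $\R^n$ as $Q \cup (\R^n - Q)$ for a half-space $Q$, and compare the two columns of the commutative diagram built from $C(Q;A) \m C(\R^n;A) \m C(\R^n, Q; A)$ and its section-space analogue. The bottom relative scanning map is a weak equivalence by Theorem \ref{theorem:relscan}, and the right-hand column is a fibration because the section-space functor takes cofiber pairs to fiber sequences. The main obstacle, and the heart of the argument, is to show that the left column becomes a homology fibration after stabilizing by the maps $t_{a_i}$; this is where the uniform local connectedness of $A$ is essential, since it allows one to locally move particles along paths between components of $A$ and thereby verify the standard criterion for a homology fibration (as in McDuff's original treatment \cite{Mc1}) in the presence of the $fF_n$-algebra structure on the labels. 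Given the homology fibration, the comparison of quasi-fibrations and the long exact sequence of homotopy groups applied to the stabilized fibers yield the desired group completion statement.
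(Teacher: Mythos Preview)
The paper does not prove this theorem; it is quoted from Salvatore \cite{Sa} as an input to the argument (``we recall the following result from \cite{Sa}''), and is then combined with the McDuff--Segal group completion theorem in the subsequent corollary to deduce that the stabilized scanning map on $C(Q_2;A)$ is a homology equivalence. So there is no paper-side proof to compare against; your task was really to reconstruct Salvatore's argument.

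Your first two paragraphs are a reasonable outline of how such a proof goes: identify $C(\R^n;A)\simeq A$ and $\Gamma^c_{\R^n}(E^A)\simeq \Omega^n B^n A$, identify $s$ with the unit $\eta$, and then appeal to the fact that $\eta$ is a group completion. (One small correction: the telescope map is a \emph{homology} equivalence, not a weak equivalence, in general.) However, your third paragraph does not work. If you split $\R^n$ along a half-space $Q$, the base $C(\R^n,Q;A)$ is contractible, since $(\R^n,Q)$ deformation retracts into $(Q,Q)$; likewise $\Gamma^c_{(\R^n,Q)}(E^A)$ is contractible. Both columns degenerate to equivalences between fiber and total space, and the diagram gives you back exactly the statement you were trying to prove. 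More generally, the paper's homology-fibration machinery for a manifold $M$ uses the $\R^n$ case on the fiber $C(Q_2;A)$ as a known input; trying to bootstrap that same machinery to establish the $\R^n$ case is circular. The actual content of the result lies in the recognition principle/approximation theorem for $fF_n$-algebras (identifying $\Omega^n B^n A$ with the group completion of $A$), which is what Salvatore supplies in \cite{Sa} and which you would need to invoke or reproduce directly rather than via the half-space decomposition.
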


\begin{corollary}
For $n>1$, the scanning map  $s: C(Q_2;A) \m \Gamma^c_{Q_2}(E^A)$ induces a homology equivalence,  $s: hocolim_{t_{a_i}} C(Q_2;A) \m hocolim_{T_{a_i}} \Gamma^c_{Q_2}(E^A)$.
\end{corollary}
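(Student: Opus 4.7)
The plan is to reduce to the case $M=\R^n$, where the preceding theorem identifies the scanning map with the canonical group completion $A \m \Omega^n B^n A$. The key identification is that $Q_2$, being diffeomorphic to the half-space $[0,1)\times \R^{n-1}$, has interior diffeomorphic to $\R^n$; pushing particles off a collar of $\partial Q_2$ yields a weak equivalence $C(\R^n;A) \overset{\simeq}{\m} C(Q_2;A)$. Correspondingly, $\Gamma^c_{Q_2}(E^A)$ is the space of sections of $E^A$ over an ambient open neighborhood of $Q_2$ which agree with the empty section outside $Q_2$; by continuity such sections vanish on $\partial Q_2$, and the compact-support condition on $Q_2$ amounts to compact support on $\mathrm{int}(Q_2)\cong \R^n$, so parallelizability of $Q_2$ (inherited from $M$) gives $\Gamma^c_{Q_2}(E^A) \simeq Map^c(\R^n,B^n A) = \Omega^n B^n A$. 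Under these identifications, the scanning map $s\colon C(Q_2;A)\m \Gamma^c_{Q_2}(E^A)$ is the canonical map $A\m \Omega^n B^n A$, and the preceding theorem yields that $s_*\colon H_*(A)[\pi_0(A)^{-1}] \xrightarrow{\cong} H_*(\Omega^n B^n A)$, with the localization being well-defined since $n\geq 2$ forces $\pi_0(A)$ to be a commutative monoid.

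Granting this, I would next check that the stabilization maps have the expected effect. The map $t_{a_i}$ adjoins the labeled particle $(q;a_i)$ with $q\in Q_1\subset Q_2$, so under $C(Q_2;A)\simeq A$ it corresponds to multiplication by $a_i$. Because $\{a_i\}$ visits every component of $\pi_0(A)$ infinitely often, the mapping telescope $hocolim_{t_{a_i}} C(Q_2;A)$ realizes on homology the localization $H_*(A)[\pi_0(A)^{-1}]$: this is the standard fact that telescopes commute with homology and that iterating left multiplication by a cofinal sequence inverts every element of a commutative monoid. On the target, multiplication by $a_i$ on the grouplike space $\Omega^n B^n A$ is already a homotopy equivalence, so $hocolim_{T_{a_i}} \Gamma^c_{Q_2}(E^A) \simeq \Gamma^c_{Q_2}(E^A)$. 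Combining these facts with the group completion isomorphism of the first paragraph gives the required homology equivalence.

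The main obstacle will be the bookkeeping of the first step: verifying that the models implicit in $C(Q_2;A)$, $\Gamma^c_{Q_2}(E^A)$, the scanning map, and the ``bring a point in at $q$'' stabilization map all correspond, after the half-space-to-$\R^n$ identification, to the standard group completion map $A\m \Omega^n B^n A$ and to multiplication by $a_i$, respectively. Everything downstream is then a routine application of the group completion theorem together with the fact that telescopes of homotopy equivalences are equivalences.
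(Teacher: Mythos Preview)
Your proposal is correct and follows essentially the same route as the paper: reduce $C(Q_2;A)$ and $\Gamma^c_{Q_2}(E^A)$ to the $\R^n$ case, identify the scanning map with the group completion map via the preceding theorem, match the stabilization maps $t_{a_i}$ with multiplication by $a_i$, and invoke the group completion theorem (using $n>1$ for homotopy commutativity). One small inaccuracy: in this section $M$ is not assumed parallelizable, so the triviality of $E^A|_{Q_2}$ comes not from $M$ but from the fact that $Q_2$ itself is contractible; the conclusion is unaffected.
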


\begin{proof}
First note that a diffeomorpism $\R^n \m Q_2$ induces homotopy equivalences $C(\R^n;A)\m C(Q_2;A) $ and $\Gamma^c_{\R^n}(E^A) \m \Gamma^c_{Q_2}(E^A)$. The space  $C(\R^n;A)$ is a $D_n$-algebra. Thus for $n>1$, it is homotopy equivalent to a homotopy commutative monoid. Therefore, it satisfies the hypothesis of the group completion theorem \cite{MSe}. The monoid multiplication maps are homotopic to the stabilization maps $t_a : C(\R^n;A) \m C(\R^n;A)$. By the group completion theorem, we can deduce that $s: hocolim_{t_{a_i}} C(\R^n;A) \m hocolim_{t_{a_i}} \Gamma^c_{Q_2}(E^A)$ is a homology equivalence and the claim follows.

\end{proof}

The left hand side of Diagram (\ref{star}) is not always a quasi-fibration, but is a homology fibration when $(A,M)$ is non-pathological.

\begin{definition}
A map $\pi : E \m B$ is called a homology fibration if the inclusion of every fiber into the homotopy fiber is a homology equivalence. 
\end{definition}

This definition implies that the Serre spectral sequence can be used to study the homology of the total space of a homology fibration. In \cite{Mc1}, McDuff states the following sufficient condition for a map being a homology fibration.

\begin{proposition} A map $r:Y \m X$ is a homology fibration with fiber $F$ if the following 5 conditions are satisfied. Let $X=\bigcup X_k$ with each $X_i$ closed.

(i) all spaces $X_k$, $X_k-X_{k-1}, r^{-1}(X_k), r^{-1}(X_k-X_{k-1})$ have the homotopy type of $CW$ complexes;

(ii) each $X_k$ is uniformly locally connected;

(iii) each $x\in X$ has a basis of contractible neighborhoods $U$ such that the contraction of $U$ lifts to a deformation retraction of $r^{-1}(U)$ into $r^{-1}(x)$;

(iv) each $r:r^{-1}(X_k-X_{k-1}) \m X_k-X_{k-1}$ is a fibration with fiber $F$;

(v) for each $k$, there is an open subset $U_k$ of $X_k$ such that $X_{k-1} \subset U_k$, and there are homotopies $h_t : U_k \m U_k$ and $H_t : r^{-1}(U_k) \m r^{-1}(U_k)$ satisfying 

(a) $h_0=id, h_t(X_{k-1}) \subset X_{k-1}, h_1(U_k) \subset X_{k-1}$;

(b) $H_0 = id, r \circ H_t =h_t \circ r;$

(c) $H_1:r^{-1}(x) \m r^{-1}(h_1(x))$ induces an isomorphism on homology for all $x \in U_k$.


\label{theorem:DoldMcDuff}

\end{proposition}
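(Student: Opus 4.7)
The plan is to induct on the filtration $X = \bigcup X_k$, showing by induction on $k$ that the restriction $r : r^{-1}(X_k) \m X_k$ is a homology fibration with fiber $F$. The base case is $r : r^{-1}(X_0) \m X_0$, which is an honest fibration by condition (iv) applied with $X_{-1} = \emptyset$. For the inductive step, the strategy is to decompose $X_k$ as an open cover $X_k = U_k \cup V_k$, where $V_k := X_k - X_{k-1}$ is open because $X_{k-1}$ is closed, verify the homology fibration property on each piece and on the intersection, and then glue via a Mayer--Vietoris argument for homology fibrations.

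The main use of condition (v) is to show that $r : r^{-1}(U_k) \m U_k$ is a homology fibration, granted that $r : r^{-1}(X_{k-1}) \m X_{k-1}$ is one by the inductive hypothesis. The homotopy $h_t$ deforms $U_k$ into $X_{k-1}$ and is covered by $H_t$, which by (v)(c) is a fiberwise homology equivalence. A zig-zag comparing the fiber $r^{-1}(x)$ at $x \in U_k$ with $r^{-1}(h_1(x))$, and then with the homotopy fiber of $r$ over $x$ via the homotopy $h_t$, combined with the inductive hypothesis over $X_{k-1}$, yields the desired homology equivalence between the fiber and the homotopy fiber. Conditions (i)--(iii) supply the point-set regularity---CW homotopy types, uniform local connectedness, and local lifts of contractions to deformation retractions of preimages---needed to validate these comparisons. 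Meanwhile, $r : r^{-1}(V_k) \m V_k$ is a genuine fibration with fiber $F$ by (iv), and since $U_k \cap V_k \subset V_k$ is open, the restriction $r : r^{-1}(U_k \cap V_k) \m U_k \cap V_k$ is also a fibration with fiber $F$.

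With the three pieces over $U_k$, $V_k$, and $U_k \cap V_k$ in hand, I would invoke the open-cover union principle for homology fibrations: if $B = A \cup A'$ is an open cover and the restrictions of $r$ over $A$, $A'$, and $A \cap A'$ are each homology fibrations with fiber $F$, then so is the restriction of $r$ over $B$. This can be established by comparing the Mayer--Vietoris spectral sequences computing $H_*(r^{-1}(B))$ and $H_*(B; \mathcal{H}_*(F))$. Over each piece of the cover, the homology fibration hypothesis identifies the local $E^2$-page with fiber homology coefficients, and the comparison map is an isomorphism on $E^2$, hence on the abutments.

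The main obstacle, as usual in Dold--Thom style arguments, is coherently assembling fiberwise homology equivalences into preimage-level equivalences with controlled behavior on overlaps. This is precisely what conditions (ii) and (iii) are designed for: uniform local connectedness and the liftability of local contractions to deformation retractions of $r^{-1}$ together trivialize $r$ locally up to homology, making the Mayer--Vietoris comparison legitimate. Condition (i) ensures the spaces involved have CW homotopy type, so that the resulting homology isomorphisms correctly identify fibers with homotopy fibers in the sense needed for $r$ to be a homology fibration.
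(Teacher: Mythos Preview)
The paper does not prove this proposition; it is quoted from McDuff \cite{Mc1} as a black box, with the remark that it is the homology-fibration analogue of Dold and Thom's quasi-fibration criterion. So there is no in-paper argument to compare against.

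Your sketch is the standard Dold--Thom/McDuff strategy and is broadly correct in outline: induct up the filtration, at each stage decompose $X_k$ into the open stratum $V_k = X_k - X_{k-1}$ (a genuine fibration by (iv)) and the thickened neighborhood $U_k$ of $X_{k-1}$ (reduced to $X_{k-1}$ via the homotopies in (v), with (v)(c) supplying the fiberwise homology isomorphism), and glue. One omission: you never pass from the individual $r^{-1}(X_k) \to X_k$ to the full $r : Y \to X$. You need a limit argument showing that homology fibrations are preserved under the increasing union $X = \bigcup X_k$; this uses that singular homology commutes with directed colimits along closed inclusions, together with the CW-type hypotheses in (i). Without this step the induction establishes the result only over each $X_k$, not over $X$ itself.
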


This is analogous to a theorem of Dold and Thom in \cite{DT} involving weak equivalences and quasi-fibrations. 

\begin{lemma}

If $dim M >1$ and $(A,M)$ is non-pathological, then the map $\pi: hocolim_{t_{a_i}}C(M;A) \m C(M,Q_2;A)$ is a homology fibration with fiber $hocolim_{t_{a_i}}C(Q_2;A)$.

\end{lemma}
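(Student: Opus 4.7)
The plan is to verify the five conditions of Proposition \ref{theorem:DoldMcDuff}, with $Y = hocolim_{t_{a_i}} C(M;A)$, $X = C(M,Q_2;A)$, and $r = \pi$. Filter $X$ by letting $X_k \subset X$ be the closed subspace of configurations with at most $k$ particles macroscopically located in $M - Q_2$; since $\pi$ forgets the part of a configuration inside $Q_2$, this stratification counts exactly the particles visible in the base, and directly generalizes the particle-number filtration used by McDuff in \cite{Mc1}.

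Conditions (i)--(iii) will follow from the non-pathology of $(A,M)$ by standard arguments: the CW type of each $X_k$ and its preimage in $Y$ is inherited from that of $C(M;A)$ and $C(Q_2;A)$; uniform local connectedness of $A$ combined with small macroscopic motions inside the injectivity radius of a fixed metric on $M$ yields uniform local connectedness of each $X_k$; and small contractible neighborhoods of $x \in X$ defined by small motions in $M - Q_2$ together with small label interpolations via $\lambda$ lift to deformation retractions of $r^{-1}(U)$ into $r^{-1}(x)$ because the deformations occur away from $Q_2$. Condition (iv) will be verified over the open stratum $X_k - X_{k-1}$ by using locally supported diffeomorphisms of $M - Q_2$ to build local trivializations; the fiber is $hocolim_{t_{a_i}} C(Q_2;A)$.

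The substantive step is condition (v). Fix a collar $c : \partial Q_2 \times (-1,1) \hookrightarrow M$ with $c(\partial Q_2 \times (-1,0]) \subset Q_2$, and let $U_k \subset X_k$ be the open neighborhood of $X_{k-1}$ consisting of $X_{k-1}$ together with all configurations having at least one particle in the outer half-collar $c(\partial Q_2 \times (0, 1/2))$. Define $h_t$ to slide each such collar particle inward along the normal at unit speed, so that $h_1$ places it strictly inside $Q_2$; all other particles are fixed. Define $H_t$ on $r^{-1}(U_k)$ by the identical rule applied to the underlying configuration in $M$. Then $r \circ H_t = h_t \circ r$ because $\pi$ forgets configurations in $Q_2$, and (v)(a), (v)(b) are immediate. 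For (v)(c), under the identifications $r^{-1}(x) \simeq r^{-1}(h_1(x)) \simeq C(Q_2;A)$, the map $H_1$ agrees, up to an isotopy of $Q_2$ moving the pushed-in location to $q$, with the stabilization map $t_a$ for the label $a \in A$ of the pushed-in particle.

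The main obstacle is (v)(c): showing that the geometric map $H_1$ induces a homology isomorphism on the stabilized fibers $hocolim_{t_{a_i}} C(Q_2;A)$. This requires reconciling the ad hoc stabilization given by the push-in with the fixed sequence $t_{a_i}$ used to form the hocolim. The key inputs are that $t_a$ depends only on the path component of $a$ up to homotopy, that it is independent of the insertion point within $Q_2$ up to isotopy since $Q_2$ is connected, and that the hypothesis that $\{a_i\}$ meets every component of $\pi_0(A)$ infinitely often allows a cofinality argument to absorb any $t_a$ into the telescope. Together these show that $H_1$ becomes a self homotopy equivalence of $hocolim_{t_{a_i}} C(Q_2;A)$, yielding (v)(c) and the lemma.
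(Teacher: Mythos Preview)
Your overall strategy---apply McDuff's criterion (Proposition~\ref{theorem:DoldMcDuff}) to a filtration of $C(M,Q_2;A)$---is the same as the paper's, and your treatment of (v)(c) via cofinality in the telescope is correct in spirit. The problem is the choice of filtration.

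You filter by the number of macroscopic particles in $M-Q_2$. This is the correct filtration in McDuff's original setting, where labels cannot be summed, but it breaks down for configuration spaces with summable labels. The issue is condition (v): your $U_k$ is not an open neighborhood of $X_{k-1}$ in $X_k$. A configuration in $X_k - X_{k-1}$ (exactly $k$ macroscopic locations) can approach $X_{k-1}$ without any particle being near the collar, in two ways the algebra structure makes available: two particles can collide (their product in $A$ becomes the label of a single macroscopic location, dropping the count by one), or a particle's label can approach the basepoint $a_0$ (making that particle vanish). In either case the limiting configuration lies in $X_{k-1}$, yet the approximating configurations have all particles far from $\partial Q_2$ and hence lie outside your $U_k$. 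Your push-in homotopy $h_t$ does nothing to such configurations, so $h_1(U_k)\subset X_{k-1}$ fails on any genuine open neighborhood.

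The paper repairs this by assuming (without loss of generality) that $\pi_0(A)=\mathbb{N}$ and filtering instead by \emph{total degree}: $X_k$ is the set of configurations whose product of labels lies in $\bigcup_{i\le k}A_i$. Degree is additive under collision and a basepoint-labeled particle has degree $0$, so the only way total degree can drop in $C(M,Q_2;A)$ is by a particle entering $Q_2$. Hence every configuration in $X_k$ close to $X_{k-1}$ genuinely has positive degree in a collar of $Q_2$, the set $U_k$ of configurations with degree $<k$ outside the collar is an honest open neighborhood of $X_{k-1}$, and a global isotopy $g_t$ of $M$ pushing the collar into $Q_2$ furnishes $h_t$ and $H_t$. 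The rest of the argument then goes through as you outlined.
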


\begin{proof}

For simplicity of notation, we will assume that $\pi_0 (A) = \N$. The general case is similar except with more indices. Since we also are assuming that $M$ is connected, we have that $\pi_0(C(M;A)) = \pi_0(C(Q_2;A)) = \N$. The isomorphism $\pi_0 (C(M;A)) \m \pi_0(A)$ is given by multiplying together all of the elements of $A$ labeling a particular configuration of points in $M$. Let $A_k$ denote the $k$'th component of $A$. We will use Theorem \ref{theorem:DoldMcDuff} to show that $\pi$ is a homology fibration by considering the following choice of filtration on $C(M,Q_2;A)$: let $X_k$ be the subset of points where the product of the labels of points macroscopically located in $M_2$ is in $A_i$ with $i \leq k$.

Conditions $(i)$ and $(ii)$ are true since the pair $(A,M)$ is non-pathological. To see that condition $(iii)$ is satisfied, fix $x \in C(M,Q_2;A) $. Let $(m_1 \ldots m_r)$ be the macroscopic locations of the points of $x$. Let $a_i \in A$ be the label of the configuration $x$ at the point $m_i$ and let $\{\mathcal U_i\}$ be a collection of disjoint open balls such that $m_i \in \mathcal U_i \subset M_2$. Let $\mathcal A_i \subset A$ be a contractible set deformation retracting onto the point $a_i$. Let $U \subset C(\bigcup \mathcal U_i;A) \subset  C(M,Q_2;A)$ be the set of configurations of points such that the product of the labels of the points macroscopically located in $\mathcal U_i$ are in $\mathcal A_i$. Sets of this form give a basis of $C(M,Q_2;A) $ which satisfy the requirements of condition $(iii)$. Condition $(iv)$ is true since $\pi$ is in fact a trivial fibration over each $X_k-X_{k-1}$ with fiber $hocolim_{t_{a_i}}C(Q_2;A)$.

We now check condition $(v)$. Let $M_3$ be $M$ minus a closed collar neighborhood of $Q_2$. Let $U_k \subset X_k$ be the subspace of configurations of points where the product of the labels of the points macroscopically located in $M_3$ is in $A_i$ with $i<k$. Let $g_t$ be a path of diffeomorphisms of $M$ with $g_0=id$, $g_t(Q_2) \subset Q_2$ for all $t$ and $g_1(M-M_3) \subset Q_2$. The isotopy $g_t$ induces homotopies $h_t:U_k \m U_k$ and $H_t:\pi^{-1}(U_k) \m \pi^{-1}(U_k)$ by applying $g_t$ to each point in the configurations. To see that condition $(a)$ is satisfied, first note that $h_0=id$ since $g_0=id$. We have that $h_t(X_{k-1}) \subset X_{k-1}$ since $g_t(Q_2) \subset Q_2$.  Since $g_1(M-M_3) \subset Q_2$, $h_1(U_k) \subset X_{k-1}$. Condition $(b)$ follows from the fact that $f_0=id$ and $H_t$ and $h_t$ are both induced by $f_t$. The map on $hocolim_{t_{a_i}}C(Q_2;A)$ from condition $(c)$ is homotopy equivalent to one induced by a product of stabilization maps $t_{a_i}$. The stabilization maps induce homology equivalences on $hocolim_{t_{a_i}}C(Q_2;A)$ provided that $n > 1$. When $n=1$, there are two potentially non-homotopic stabilization maps $C(Q_2;A) \m C(Q_2;A)$ corresponding to adding points from the left or right. Thus $\pi$ is a homology fibration.

\end{proof}

Before we can prove Theorem \ref{theorem:main3}, we need the following lemma.

\begin{lemma}
For $dim M>1$, the action of $\pi_1(C(M,Q_2;A))$ on $H_*(hocolim_{t_{a_i}} C(Q_2;A))$ induced by the map $\pi$ is trivial. Also, the action of $\pi_1(\Gamma^c_{(M,Q_2)}(E^A))$ on $H_*(hocolim_{T_{a_i}} \Gamma^c_{Q_2}(E^A))$ is trivial.

\end{lemma}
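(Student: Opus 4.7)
The plan is as follows. The key observation is that each stabilization map $t_{a_i}: C(M;A) \to C(M;A)$ acts as the identity on the quotient $C(M, Q_2; A)$, since $t_{a_i}$ only modifies configurations inside $Q_1 \subset Q_2$, and these modifications are invisible in the relative configuration space. Consequently, a loop $\gamma \in \pi_1(C(M, Q_2; A))$ can be lifted to a path $\tilde\gamma$ in the total space $hocolim_{t_{a_i}} C(M; A)$ whose endpoints $\tilde x_0, \tilde x_1$ both lie in the fiber $hocolim_{t_{a_i}} C(Q_2; A)$, and the monodromy $\mu_\gamma$ on homology is determined by these endpoints.

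I would then argue that $\tilde x_0$ and $\tilde x_1$ lie in the same path-component of the fiber. By the corollary applied to $Q_2$ (whose interior is diffeomorphic to $\R^n$), the stabilized fiber is weakly homology equivalent to $\Omega^n B^n A$, an $n$-fold loop space. For $n \geq 2$, this is a homotopy commutative $H$-space whose $\pi_0$ is the group completion of $\pi_0(A)$. The difference $[\tilde x_1] - [\tilde x_0]$ in $\pi_0$ of the fiber is the image of $[\gamma]$ under the connecting map of the long exact sequence associated to the homology fibration; this image vanishes because the projection $\pi$ preserves total particle counts, so the induced map on $\pi_0$ is injective on the relevant classes after stabilization. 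Once $\tilde x_0$ and $\tilde x_1$ are in the same component of the fiber, a fiber-path connecting them promotes $\tilde\gamma$ to a genuine loop in the total space, so $\mu_\gamma$ is homotopic to the identity and acts trivially on $H_*$.

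The analogous statement for the section side follows from the same argument applied to the honest fibration $\Gamma^c_{Q_2}(E^A) \to \Gamma^c_M(E^A) \to \Gamma^c_{(M, Q_2)}(E^A)$, whose fiber $\Omega^n B^n A$ is already group-complete. Alternatively, triviality transfers from the configuration side via naturality of the scanning map, which commutes with the two monodromy actions. The main obstacle is rigorously justifying the closing-up step: showing that the $\pi_0$-discrepancy of the endpoints vanishes in the stabilized fiber, and exhibiting an explicit closing path in the mapping telescope compatible with the homology fibration structure of the previous lemma. The hypothesis $n > 1$ is essential here — it ensures the fiber is a homotopy commutative $H$-space with abelian $\pi_0$ (via the $E_n$-structure together with the group completion theorem), which the above argument requires.
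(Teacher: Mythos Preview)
Your argument contains a genuine gap at the closing-up step. You claim that if the lifted path $\tilde\gamma$ can be closed to a loop in the total space by appending a fiber-path, then the monodromy $\mu_\gamma$ is homotopic to the identity. This is false in general: the monodromy $\mu_\gamma$ is a self-map of the \emph{entire} fiber $F$, not merely a map of basepoints, and the existence of a section of the pulled-back bundle over $S^1$ does not force that bundle to be trivial. A concrete counterexample is the Klein bottle viewed as an $S^1$-bundle over $S^1$: every lift of the generating base loop can be closed up in the fiber (since $S^1$ is connected), yet the monodromy acts as $-1$ on $H_1(S^1)$. Your earlier assertion that ``the monodromy $\mu_\gamma$ on homology is determined by these endpoints'' is likewise unjustified --- it would hold only if you already knew that $\mu_\gamma$ acts as a translation in the $H$-space structure of the fiber, and that is exactly the point requiring proof.

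The paper's argument is shorter and goes in a different direction. It identifies the monodromy action of $\pi_1(C(M,Q_2;A))$ on $H_*(hocolim_{t_{a_i}} C(Q_2;A))$ as conjugation by the stabilization maps $t_a$ in the $E_n$-algebra $C(Q_2;A)$; since $C(Q_2;A)$ is homotopy commutative for $n>1$, this conjugation is trivial on homology. The hypothesis $n>1$ enters precisely here --- to guarantee homotopy commutativity --- rather than through any $\pi_0$ computation or connecting-map argument. To salvage your approach you would need to establish that the monodromy is realized through the $H$-space multiplication on the fiber (as a translation or conjugation), at which point homotopy commutativity finishes the proof immediately; but that identification is the substantive content, and once you have it the closing-up manoeuvre is unnecessary.
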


\begin{proof}
The way $\pi_1(C(M,Q_2;A))$ acts on $H_*(hocolim_{t_{a_i}} C(Q_2;A))$ is via conjugation by the maps on homology induced by the stabilization maps. Since $C(Q_2;A)$ is homotopy commutative for $n>1$, conjugation is trivial. The same argument applies to the action of $\pi_1(\Gamma^c_{(M,Q_2)}(E^A))$ on $H_*(hocolim_{T_{a_i}} \Gamma^c_{Q_2}(E^A))$.

\end{proof}

The action could be non-trivial if we considered $Q_2$ whose interior is not of the form $\R^2 \times N$ with $N$ a connected $(n-2)$-manifold. We can now prove Theorem \ref{theorem:main3}, nonabelian Poincar\e duality after stabilizing for Salvatore's configuration space of particles with summable labels. 

\begin{proof}[Proof of Theorem \ref{theorem:main3}]
See Diagram (\ref{star}). The left row is a homology fibration and the right row is a quasi-fibration and hence also a homology fibration. The bottom row is a weak-equivalence and the top row is a homology equivalence. The fundamental groups of the bases act trivially on the fibers. Consider the Serre spectral sequences for the two homology fibrations. The scanning map induces an isomorphism between the $E_2$ pages and hence an isomorphism between the $E_{\infty}$ pages. Thus $s:hocolim_{t_{a_i}}C(M;A) \m  hocolim_{T_{a_i}} \Gamma^c_{M}(E^A)$ is a homology equivalence. Since each $T_{a_i}$ is a homotopy equivalence, $hocolim_{T_{a_i}} \Gamma^c_{M}(E^A)$ is homotopy equivalent to $\Gamma^c_{M}(E^A)$. This completes the proof.
\end{proof}

\section{A conjecture on homological stability}

There are several natural questions raised by Theorem \ref{theorem:main}. In this section, we highlight one of them and make a conjecture regarding the effects in homology of the stabilization maps $t_{a_i} : \int_M A \m \int_M A$. For simplicity of notation, assume that $M$ is connected and open and $\pi_0(A) =\N$.

\begin{definition}
Let $A$ be an $E_n$-algebra with $\pi_0(A) =\N$. Fix $b \in A_1$ and let $m_b : A_k \m A_{k+1}$ be a multiplication with $b$ map. We say that $A$ has homological stability if there is a function $r : \N \m \N$ tending to $\infty$ such that $m_{b*} : H_i(A_k) \m H_i(A_{k+1})$ is an isomorphism for $i \leq r(k)$.
\end{definition}

Examples of such $E_n$-algebras include: labeled (without summable labels) configuration spaces \cite{Se} \cite{Rw1}, rational functions \cite{Se} \cite{BM}, as well as the union of the classifying spaces of the mapping class groups of once punctured surfaces \cite{Ha}. Let $(\int_M A)_k$ denote the $k$'th component of $\int_M A$.

\begin{conjecture}
Let $M$ be an open connected parallelizable $n$-manifold and let $A$ be an $E_n$-algebra with $\pi_0(A)=\N$. Fix $b \in A_1$ and let $t_b: (\int_M A)_k \m (\int_M A)_{k+1}$ be a stabilization map described in previous sections. If $A$ has homological stability, we conjecture that there is a range of dimensions tending to infinity below which $t_b$ induces an isomorphism on homology. Moreover, this range should depend only on the homological stability range of $A$. 

\end{conjecture}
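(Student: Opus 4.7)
The plan is to prove the conjecture by induction on a handle decomposition of $M$, working in Salvatore's model $C(M;A)$ so that the quasi-fibration machinery of Section 3 is available. Since $M$ is open, connected, \p, and the interior of a manifold with nonempty boundary, a proper Morse function $f: M \m [0,\infty)$ without local maxima presents $M$ as an ascending union of open handlebodies, starting from a single $0$-handle diffeomorphic to $\R^n$ and thereafter attaching handles of index at most $n-1$. The aim is to show that $t_b : C(M;A)_k \m C(M;A)_{k+1}$ is a homology isomorphism for $i \leq r_M(k)$, where $r_M$ depends only on the stability function $r$ of $A$ and on the handle decomposition, and satisfies $r_M(k) \m \infty$ as $k \m \infty$.

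The base case $M = \R^n$ is essentially immediate: $C(\R^n;A) \simeq A$ as $E_n$-algebras, and under this identification the stabilization map $t_b$ corresponds to the multiplication-by-$b$ map $m_b : A_k \m A_{k+1}$, so the hypothesis on $A$ applies with the same range $r$. For the inductive step, write $M = M' \cup H$ with $M'$ a previously handled open submanifold and $H$ an open thickening of the newly attached handle, meeting along a collar $N = M' \cap H$. An appropriate variant of Lemma \ref{lemma:qfib} yields a homology fibration sequence
\begin{equation*}
C(H, N; A) \m C(M;A) \m C(M, M'; A),
\end{equation*}
in which both fiber and base are, up to homology equivalence, configuration spaces on strictly simpler open \p manifolds. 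Comparing the Serre spectral sequences of this fibration at levels $k$ and $k+1$, the stability range of the total space is at worst $\min(r_{\mathrm{fib}}, r_{\mathrm{base}}) - c$ for some constant $c$, and iterating over the finitely (or countably) many handles yields the conjectured range for $C(M;A)$.

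The main obstacle is that the homology fibrations of Subsection 3.4 were established only after passing to $\mathrm{hocolim}_{t_b}$, so a finite-stage refinement is needed to obtain any concrete range. This is precisely the technical core of modern homological stability results in the style of Randal-Williams and Galatius--Randal-Williams: one resolves $C(M;A)_k$ by a semi-simplicial space whose $p$-simplices parametrize ``destabilizing data'' such as embedded arcs running out to the end of $M$ and carrying the label $b$, and the stability range is then governed by the connectivity of this resolution. Adapting these arc-complex connectivity arguments to the summable-label setting, and verifying the shuffling identities required to relate the semi-simplicial resolution to the map $t_b$, is the step I expect to require the most care; one anticipates a connectivity bound of the form $\lfloor (k - c)/d \rfloor$ with $d$ depending on $n$, which would yield the conjectured range depending only on $r$.
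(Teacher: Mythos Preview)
The statement you are addressing is a \emph{conjecture} in the paper; the paper offers no proof. The only remarks following the conjecture are that it is trivially true when $M=\R^n$ and that it holds when $A=C(\R^n)X$ for a based space $X$. There is therefore no proof in the paper to compare your proposal against.

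As a standalone argument, your proposal is an outline of a plausible strategy, not a proof, and you correctly locate the gap yourself. The homology-fibration results of Section~3 are established only \emph{after} passing to the full homotopy colimit over all stabilization maps; condition~(v)(c) of Proposition~\ref{theorem:DoldMcDuff} is verified precisely because in $\mathrm{hocolim}_{t_{a_i}} C(Q_2;A)$ every stabilization map has already been inverted in homology. At any finite stage $k$ this input is absent, so your Serre spectral sequence comparison at levels $k$ and $k+1$ has no established $E_2$-isomorphism on either fiber or base to feed into it. The sentence ``adapting these arc-complex connectivity arguments to the summable-label setting'' is not a technical wrinkle but the entire substance of the conjecture; what you have written is a description of what a proof would have to accomplish rather than a proof.

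There is also a secondary issue with the handle induction as stated. Restricting the projection $C(M;A)\to C(M,M';A)$ to the component $C(M;A)_k$ does not land in a single component of the base: a configuration of total label $k$ can have any portion of that label supported in $M'$, and that portion is forgotten in the relative space. Hence there is no single ``spectral sequence at level $k$'' with fiber $C(H,N;A)_j$ and base $C(M,M';A)_{k-j}$ for fixed $j$; the grading mixes, and the naive range estimate $\min(r_{\mathrm{fib}},r_{\mathrm{base}})-c$ does not follow without further argument.
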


This conjecture is trivially true when $M = \R^n$. It is also true when $A = C(\R^n)X$ for a based space $X$. If true, combining this conjecture with Theorem \ref{theorem:main} would allow one to conclude that the scanning map $s:\int_M A \m Map^c(M,B^n A)$ is a homology equivalence though a range of dimensions.

\bibliography{thesis}{}
\bibliographystyle{alpha}

\end{document}